\newcommand{\Haus}{\dim_{\mathrm{H}}}
\newtheorem*{thm*}{Theorem}
\newtheorem*{conj*}{Conjecture}
\newtheorem*{ques*}{Question}
\newtheorem*{rem*}{Remark}
\newtheorem*{defn*}{Definition}
\newtheorem*{mainques*}{Main questions}
\newtheorem{thmx}{Theorem}
\newtheorem{thm}{Theorem}[section]
\newtheorem{lma}[thm]{Lemma}
\newtheorem{defn}[thm]{Definition}
\newtheorem{conj}[thm]{Conjecture}
\newtheorem{rem}[thm]{Remark}
\def\supp{\mathrm{supp}}
\def\cl{\mathrm{cl}}
\pgfplotsset{compat=1.16}
\begin{document}
\title[radial and linear projection]{On the absolute continuity of radial and linear projections of missing digits measures}
% \title[short text for running head]{full title}
%    Only \author and \address are required; other information is
%    optional.  Remove any unused author tags.
%    author one information
% \author[short version for running head]{name for top of paper}

\author{Han Yu}
\address{Han Yu, Mathematics Institute, Zeeman Building, University of Warwick, Coventry CV4 7AL, UK}
\curraddr{}
\email{han.yu.2@warwick.ac.uk}

%    \subjclass is required.
\subjclass[2010]{ 11Z05, 11J83, 28A80}

\keywords{}

\maketitle

\begin{abstract}
In this paper, we study the absolute continuity of radial projections of missing digits measures. We show that for large enough missing digits measures $\lambda$ on $\mathbb{R}^n,n\geq 2,$ for all $x\in\mathbb{R}^n\setminus \supp(\lambda),$ $\Pi_x(\lambda)$ is absolutely continuous with a density function in $L^2(S^{n-1}).$ Our method applies to linear projections as well. In particular, we show that for $\lambda$ as above, the linearly projected measure $P_\theta(\lambda)$ is absolutely continuous with a continuous density function for almost all directions $\theta\in S^{n-1}.$ This implies a version of Palis' conjecture for missing digits sets.
\end{abstract}

\maketitle
%\tableofcontents
\allowdisplaybreaks

\section{Introduction}
\subsection{Radial Projection}
Let $n\geq 2$ be an integer. Let $x\in\mathbb{R}^n.$ Define the function
\[
\Pi_x: y\in\mathbb{R}^n\setminus\{x\}\to \Pi_x(y)=\frac{y-x}{|y-x|}\in \mathbb{S}^{n-1}.
\]
This function $\Pi_x$ is the radial projection centred at $x$. Let $A\subset\mathbb{R}^n.$ The set $\Pi_x(A\setminus\{x\})$ is the radial projection of $A$ from the observing position $x$. Let $\lambda$ be a compactly supported Borel probability measure on $\mathbb{R}^n.$ Suppose that $x$ is not in $\supp(\lambda).$ Then $\Pi_x(\lambda)$ is the pushed forward measure. See Section \ref{sec: rad}. The following result was proved in \cite[Theorem 1.11]{O19}.
\begin{thm}\label{thm: Or1}
    Let $n\geq 2$ be an integer. Let $\lambda$ be a compactly supported Borel probability measure on $\mathbb{R}^n.$ Suppose that
    \[
    I_s(\lambda)=\int\int\frac{d\lambda(x)d\lambda(y)}{|x-y|^s}<\infty
    \]
    for some $s>n-1.$ Then $\Pi_x(\lambda)$ is absolutely continuous for all $x\in\mathbb{R}^n\setminus\supp(\lambda)$ outside of an exceptional set with Hausdorff dimension at most $2(n-1)-s.$  
\end{thm}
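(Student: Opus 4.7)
The plan is to attack the theorem via a Frostman-energy dichotomy: convert the absolute continuity statement into a quantitative $L^2$-estimate for the radial projections, averaged over an auxiliary measure supported on the putative exceptional set. Since $\Pi_x(\lambda)\in L^2(S^{n-1})$ is a sufficient (and in fact strictly stronger) condition for absolute continuity, I would aim to show $\|\Pi_x(\lambda)\|_{L^2(S^{n-1})}<\infty$ for all $x$ outside a set of Hausdorff dimension at most $2(n-1)-s$.

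First I would argue by contradiction: assume the set $E$ of viewpoints $x\in\mathbb{R}^n\setminus\supp(\lambda)$ for which $\Pi_x(\lambda)$ is not absolutely continuous has $\Haus E > 2(n-1)-s$. By Frostman's lemma, after restricting $E$ to a compact subset at positive distance from $\supp(\lambda)$, there is a nonzero finite Borel measure $\mu$ on $E$ with $I_t(\mu) < \infty$ for some $t$ satisfying $s + t > 2(n-1)$. The core of the argument is then the quadratic inequality
\[
\int \|\Pi_x(\lambda)\|_{L^2(S^{n-1})}^2 \, d\mu(x) \;\lesssim_{n,s,t,\mu}\; I_s(\lambda).
\]
Once established, Fubini gives $\|\Pi_x(\lambda)\|_{L^2(S^{n-1})}<\infty$ for $\mu$-almost every $x$; in particular $\Pi_x(\lambda)$ is absolutely continuous for $\mu$-a.e.\ $x$, contradicting that $\mu(E)>0$ and the definition of $E$.

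To derive the quadratic inequality, I would use the ball-counting representation of the $L^2$-norm on the sphere: for any Borel measure $\nu$ on $S^{n-1}$,
\[
\|\nu\|_{L^2(S^{n-1})}^2 \;\approx\; \liminf_{\delta\to 0^+}\; \delta^{-(n-1)} \iint \mathbf{1}\bigl[|\eta_1 - \eta_2| \leq \delta\bigr]\, d\nu(\eta_1)\, d\nu(\eta_2).
\]
Applied to $\nu=\Pi_x(\lambda)$ and integrated in $x$ against $\mu$, and then exchanging the order of integration, the problem reduces to bounding, uniformly in small $\delta$,
\[
\delta^{-(n-1)} \iint \mu\bigl(\{x : \angle(y-x,\,z-x) \leq \delta\}\bigr) \, d\lambda(y)\, d\lambda(z).
\]
The inner $\mu$-measure is the mass of a thin bicone of opening angle $\delta$ based at $y$ and $z$. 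A Frostman-type geometric estimate using $I_t(\mu)<\infty$ should bound it by $\delta^{n-1}$ times a Riesz-type kernel of order $s$ in $|y-z|$, where the exponents balance precisely because $s + t > 2(n-1)$. Substituting yields an upper bound of the form $C\cdot I_s(\lambda)$, finite by hypothesis.

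The main obstacle is exactly this bicone-measure estimate: one must control the $\mu$-mass of viewpoints from which two fixed points $y, z \in \supp(\lambda)$ appear nearly collinear, simultaneously balancing the singularities as $x \to y$, $x \to z$, and $y \to z$. The sharp threshold $s + t = 2(n-1)$ arises at the break-even point of this calculation, which is exactly why the exponent $2(n-1)-s$ appears as the dimension bound for the exceptional set. Handling the different regimes (for instance, $x$ far from the line through $y,z$ versus $x$ inside the thin tube, and $|y-z|$ small versus comparable to the diameter of $\supp(\lambda)\cup\supp(\mu)$) at the critical exponent is the delicate technical point of the argument.
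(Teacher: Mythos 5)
First, a point of context: the paper does not prove this statement---it is quoted from Orponen \cite[Theorem 1.11]{O19}---so there is no in-paper argument to compare yours against. Judged on its own terms, your proposal has a genuine gap, and it sits exactly where you flag ``the main obstacle'': the bicone-measure estimate you need is false at the exponent you need. For fixed $y,z$ with $\rho=|y-z|$ and $x$ ranging over a compact set at distance $\asymp 1$ from $\supp(\lambda)$, the set $\{x:|\Pi_x(y)-\Pi_x(z)|\le\delta\}$ is essentially a tube of radius $\delta/\rho$ around the line through $y$ and $z$, and the best that $I_t(\mu)<\infty$ (i.e.\ $\mu(B(x,r))\lesssim r^t$ after restriction) yields is $\mu(\text{tube})\lesssim(\delta/\rho)^{t-1}$; this is sharp when $\mu$ sits on that line, which is possible here because $t\approx 2(n-1)-s<n-1$. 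Feeding this into your double integral and decomposing dyadically in $\rho$ gives $\delta^{-(n-1)}\iint(\delta/\rho)^{t-1}\,d\lambda\,d\lambda\lesssim\delta^{t-n}I_s(\lambda)$, which blows up as $\delta\to0$ for every $t<n$. The exponents do not balance at $s+t=2(n-1)$ for the $L^2$ quantity; your claimed bound $\mu(\text{bicone})\lesssim\delta^{n-1}|y-z|^{-s}$ does not hold.

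This is not a repairable technicality, because the quadratic inequality itself is false. In the plane, let $\lambda=\sum_j\lambda_j$, where $\lambda_j$ spreads mass $m_j=w_j^{(s-1)/2}/j$ uniformly over the tube $[0,1]\times[-w_j,w_j]$ with $w_j\downarrow 0$ very fast. One checks $I_s(\lambda)<\infty$, yet for every viewpoint $x$ on the $x$-axis away from $[0,1]$ the measure $\Pi_x(\lambda_j)$ puts mass $m_j$ on an arc of length $\asymp w_j$, so $\|\Pi_x(\lambda)\|_{L^2}^2\ge\sup_j m_j^2/w_j=\sup_j w_j^{s-2}/j^2=\infty$, even though each such $\Pi_x(\lambda)$ is still absolutely continuous (a countable sum of absolutely continuous pieces with summable masses). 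Thus the exceptional set for the $L^2$ property contains a line segment, of dimension $1>2(n-1)-s$, and putting any Frostman measure $\mu$ of dimension $t\in(2-s,1)$ on it violates your inequality. Your opening reduction---replacing absolute continuity by the strictly stronger $L^2$ condition---therefore cannot give the stated exceptional-set bound; it is exactly why Theorem \ref{thm: Or2} (including $p=2$) carries the penalty $\delta(p)>0$ that vanishes only as $p\downarrow1$. At best your argument would reprove a form of that weaker statement. The clean bound $2(n-1)-s$ in Theorem \ref{thm: Or1} requires arguing absolute continuity directly (Orponen does so via a $\sigma$-finiteness and decomposition argument for radial projections), not through an $L^p$ norm for a fixed $p>1$.
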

For example, if $\lambda$ is $AD$-regular (see Section \ref{sec: AD}), then the finiteness of the integral happens when $s>\Haus \lambda.$ Roughly speaking, this result tells us that for thick enough measures, the radial projections are absolutely continuous for most of the 'observing positions'. The projected measures can have more regularities than just being absolutely continuous. In fact, the following finer result holds. See \cite[Theorem 1.13]{O19}.
\begin{thm}\label{thm: Or2}
    Let $n\geq 2$ be an integer. Let $\lambda$ be a compactly supported Borel probability measure on $\mathbb{R}^n.$ Suppose that
    \[
    I_s(\lambda)=\int\int\frac{d\lambda(x)d\lambda(y)}{|x-y|^s}<\infty
    \]
    for some $s>n-1.$ Let $p>1$. Then $\Pi_x(\lambda)$ is absolutely continuous with a density function in $L^p$ for all $x\in\mathbb{R}^n\setminus\supp(\lambda)$ outside of an exceptional set with Hausdorff dimension at most $2(n-1)-s+\delta(p)$ where $\delta(p)>0$ and moreover $\delta(p)\downarrow 0$ as $p\downarrow 1.$ 
\end{thm}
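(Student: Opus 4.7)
The plan is to follow the Frostman/energy scheme underlying Theorem \ref{thm: Or1} and extract higher integrability by interpolating between the $L^1$ and $L^2$ endpoints. Fix $p \in (1,2]$. Let $E_p \subset \mathbb{R}^n \setminus \supp(\lambda)$ be the set of $x$ for which $\Pi_x(\lambda)$ does not admit a density in $L^p(S^{n-1})$. Arguing by contradiction, suppose $\Haus E_p > t := 2(n-1) - s + \delta(p)$ for some $\delta(p) > 0$ to be chosen, and pick (via Frostman's lemma) a compactly supported Borel probability measure $\mu$ on $E_p$ with $\mu(B(x,r)) \leq C r^t$ for all $x, r$. It is enough to prove
\[
\int \|\Pi_x(\lambda)\|_{L^p(S^{n-1})}^p \, d\mu(x) < \infty,
\]
since by Fubini this forces $\Pi_x(\lambda) \in L^p$ for $\mu$-almost every $x$, contradicting $\mu(E_p) = 1$.

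Next I would decompose $\Pi_x(\lambda)$ via a smooth dyadic partition on $S^{n-1}$ at angular scales $2^{-j}$, writing $\Pi_x(\lambda) = \sum_{j \geq 0} \nu_{x,j}$, and apply the pointwise interpolation $\|\nu_{x,j}\|_{L^p}^p \leq \|\nu_{x,j}\|_{L^1}^{2-p}\|\nu_{x,j}\|_{L^2}^{2(p-1)}$, valid for $1 \leq p \leq 2$. Integrating against $\mu$ and using H\"older in $x$ reduces matters to a scale-dependent $L^2$ estimate of the form
\[
\int \|\nu_{x,j}\|_{L^2(S^{n-1})}^2 \, d\mu(x) \lesssim 2^{j\eta} \cdot I_s(\lambda),
\]
for some exponent $\eta = \eta(n, s, t) \geq 0$ that vanishes precisely when $t = 2(n-1) - s$. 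This bound is obtained by expanding the $L^2$ norm as a double integral, using the comparison $|\Pi_x(y) - \Pi_x(z)| \gtrsim |y-z|/\max(|x-y|, |x-z|)$ away from degenerate configurations, and then applying the Frostman growth of $\mu$ against the singularity $|x-y|^{-(n-1)}$ truncated at scale $2^{-j}$; this is essentially the quantitative content already present in the proof of Theorem \ref{thm: Or1}.

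Combining the last two displays, $\int \|\Pi_x(\lambda)\|_{L^p}^p \, d\mu(x)$ is majorised by a geometric series in $j$ with common ratio governed by $2^{j\eta(p-1)}$, up to polynomial-in-$j$ losses coming from the $L^1$ endpoint. Choosing $\delta(p)$ proportional to $p-1$ with a small enough constant depending on $n$ makes the effective exponent strictly negative after the interpolation bookkeeping, ensuring convergence of the sum and yielding the required property $\delta(p) \downarrow 0$ as $p \downarrow 1$.

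The main obstacle I expect is the scale-dependent $L^2$ bound: the comparison $|\Pi_x(y) - \Pi_x(z)| \gtrsim |y-z|/\max(|x-y|,|x-z|)$ degenerates when $x, y, z$ are nearly collinear, so a substantial part of the argument consists of a configuration-space decomposition and the control of near-collinear triples via a second use of the Frostman property of $\mu$, bounding how often $x$ can lie close to the line through two typical points of $\lambda$. Tuning this decomposition so that the gain $\eta$ vanishes linearly as $t \to 2(n-1) - s$, uniformly across scales $j$, is the delicate quantitative step that makes $\delta(p) \downarrow 0$ attainable.
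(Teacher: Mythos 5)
First, a point of order: the paper does not prove this statement. Theorem \ref{thm: Or2} (like Theorem \ref{thm: Or1}) is quoted from Orponen \cite[Theorem 1.13]{O19}, so there is no in-paper proof to compare your proposal against. I can only assess the proposal against what is actually required to establish Orponen's theorem.

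There is a genuine gap, and it sits exactly where you park the difficulty. Your scheme reduces everything to the scale-dependent estimate $\int\|\nu_{x,j}\|_{L^2}^2\,d\mu(x)\lesssim 2^{j\eta}I_s(\lambda)$ with $\eta$ vanishing as $t\downarrow 2(n-1)-s$, and you assert this is ``essentially the quantitative content already present in the proof of Theorem \ref{thm: Or1}.'' It is not, and the ingredients you list cannot produce it. Expanding the $L^2$ norm at scale $\delta=2^{-j}$ gives
\[
\delta^{-(n-1)}\iint\mu\bigl(\{x:|\Pi_x(y)-\Pi_x(z)|\le\delta\}\bigr)\,d\lambda(y)\,d\lambda(z),
\]
and the set in question is (for $x$ at distance $\asymp 1$ from $\supp(\lambda)$) a tube of width $\asymp\delta/|y-z|$ around the line through $y$ and $z$. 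The only bound the Frostman condition $\mu(B(x,r))\le Cr^{t}$ yields for such a tube is $\min\{1,(\delta/|y-z|)^{t-1}\}$, and feeding this into $I_s(\lambda)<\infty$ gives $\int\|\nu_{x,j}\|_2^2\,d\mu\lesssim 2^{j(n-t)}I_s(\lambda)$, i.e. $\eta=n-t=s-n+2-\delta(p)>1-\delta(p)$ throughout the relevant range $s>n-1$, $t=2(n-1)-s+\delta(p)$. So $\eta$ is bounded \emph{below} by essentially $1$ rather than tending to $0$; since $\|\nu_{x,j}\|_{L^1}\lesssim 1$ for the pieces of a probability measure, the interpolated series $\sum_j 2^{j\eta(p-1)}$ diverges for every $p>1$, and no choice of $\delta(p)$ rescues it. Worse, when $t<1$ (precisely the strongest regime, e.g. $n=2$ and $s$ near $2$) the Frostman bound on tubes is completely vacuous. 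The near-collinear configurations are therefore not a technical degeneracy to be absorbed by ``a second use of the Frostman property of $\mu$'': the Frostman property alone gives nothing there, and controlling how much $\mu$-mass can sit on the family of lines spanned by $\lambda$-typical pairs is the entire theorem. Orponen supplies this missing input via a symmetrization between the $\lambda$-mass and the $\mu$-mass of the same tubes together with a decomposition according to concentration near lines/hyperplanes; some incidence-type mechanism of this kind, absent from your outline, is indispensable. The outer layer of your plan (Frostman measure on the exceptional set, reduction to $\int\|\Pi_x(\lambda)\|_{L^p}^p\,d\mu<\infty$, interpolation to explain $\delta(p)\downarrow 0$) is consistent with how such results are proved; it is the core estimate that is missing.
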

For general $\lambda$, it is very difficult, if at all possible, to obtain more information about the exceptional sets in Theorem \ref{thm: Or1} and Theorem \ref{thm: Or2}. However, if $\lambda$ satisfies some other restrictions, e.g. being self-similar, then it is likely that the exceptional set for $\lambda$ is empty. In this paper, we consider radial projections of missing digits measures. See Section \ref{sec: missing}. In this case, it is plausible that the following conjecture holds.
\begin{conj}\label{conj}
    Let $n\geq 2$. Let $\lambda$ be a missing digits measure (or a Cartesian product of missing digits measures). Suppose that $\Haus \lambda>n-1.$ Then 
    \begin{itemize}
        \item{Difficulty level 1:}  $\Pi_x(\lambda)$ is absolutely continuous for all $x\in \mathbb{R}^n\setminus \supp(\lambda).$
        \item{Difficulty level 2:}  $\Pi_x(\lambda)$ is in $L^2(S^{n-1})$ for all $x\in \mathbb{R}^n\setminus \supp(\lambda).$
        \item{Difficulty level 3:}  $\Pi_x(\lambda)$ is in $L^p(S^{n-1})$ for all $p>1$ and all $x\in \mathbb{R}^n\setminus \supp(\lambda).$
        \item{Difficulty level 4:}  $\Pi_x(\lambda)$ is absolutely continuous with a upper semi continuous density function for all $x\in \mathbb{R}^n\setminus \supp(\lambda).$
    \end{itemize}
\end{conj}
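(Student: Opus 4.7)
My plan is to attack Difficulty level 2 first (which immediately gives level 1, since $L^2$ densities are in particular $L^1$), and then sketch how levels 3 and 4 should follow. The overall strategy I would use combines a Fourier-analytic energy estimate with the self-similar structure of missing digits measures.

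\emph{Reduction to an energy estimate.} I would first carry out the standard polar-coordinates computation centred at $x$, of the sort that underlies Theorems~\ref{thm: Or1} and~\ref{thm: Or2}, which yields
\[
\|\Pi_x(\lambda)\|_{L^2(S^{n-1})}^2 \;\lesssim\; \iint K_x(y,z)\,d\lambda(y)\,d\lambda(z),
\]
where $K_x(y,z)$ decays like $|y-x|^{-(n-1)}$ in the radial variable and blows up like the inverse $(n-1)$-th power of the transverse distance from $z$ to the ray $x+\mathbb{R}_{+}(y-x)$. My goal thus becomes: bound this double integral uniformly in $x\notin\supp(\lambda)$.

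\emph{Self-similar/Fourier bound.} Missing digits measures are Ahlfors $\Haus\lambda$-regular and satisfy a self-convolution identity $\lambda=\mu*S_b\lambda$, with $\mu$ the uniform measure on a rescaled copy of the digit set and $S_b$ the $b^{-1}$-scaling. I would use two consequences of $\Haus\lambda>n-1$: (i) the Frostman estimate $\lambda(B(w,r))\lesssim r^{\Haus\lambda}$, which gives $\sup_x \int |y-x|^{-(n-1)}\,d\lambda(y)<\infty$ and thus controls the radial factor of $K_x$; and (ii) the $L^2$-Fourier decay $\int_{|\xi|\sim R}|\hat\lambda(\xi)|^2\,d\xi\lesssim R^{n-\Haus\lambda}$, which via Plancherel controls the transversal integration after a dyadic decomposition of $K_x$. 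I would then iterate the self-convolution relation to rescale each dyadic annulus around $x$ to unit scale, converting the energy integral into a geometric series in $b^{n-1-\Haus\lambda}$ that converges precisely because $\Haus\lambda>n-1$.

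\emph{Higher levels and main obstacle.} For level 3, I would interpolate the level-2 bound against an a priori bound on the density of the smoothed measure $\lambda*\psi_\epsilon$, with uniform rate provided by the self-similar renormalisation; level 4 should then follow by a monotone approximation via these smoothings. The hard part will be the uniformity in $x$: the kernel $K_x$ is \emph{directionally} singular (its singular set has codimension one, not a single point), so standard Frostman/energy methods that only see radially-averaged singularities are not sharp enough when $x$ lies close to $\supp(\lambda)$. Proving that $\lambda\times\lambda$ is uniformly transverse to the collinearity variety $\{(y,z): z\in x+\mathbb{R}(y-x)\}$ for \emph{every} $x\notin\supp(\lambda)$---including exceptional $x$'s for which many radial rays through $x$ meet $\supp(\lambda)$ with high multiplicity---is, I expect, the genuinely difficult point of the conjecture, and it likely requires additive-combinatorial input on the digit set that is not automatic from Ahlfors regularity alone.
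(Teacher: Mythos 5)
The statement you are trying to prove is labelled a \emph{conjecture} in the paper, and the author states explicitly that even the difficulty level~1 part is open; the paper only proves a weaker substitute (Theorem~\ref{thm: n1}) in which the hypothesis $\Haus\lambda>n-1$ is replaced by the strictly stronger condition $\dim_{l^1}\lambda>n-p^{-1}$ on the $l^1$-Fourier dimension, a condition that holds only for rather special missing digits measures (very large base, structured digit set, as in Theorems~\ref{thm: A} and~\ref{thm: B}). Your proposal does not close this gap. The two inputs you invoke --- the Frostman bound $\lambda(B(w,r))\lesssim r^{\Haus\lambda}$ and the $L^2$-averaged Fourier decay $\int_{|\xi|\sim R}|\hat\lambda(\xi)|^2\,d\xi\lesssim R^{n-\Haus\lambda}$ --- are exactly equivalent to the finiteness of the Riesz energy $I_s(\lambda)$ for $s<\Haus\lambda$, and that information is already known to yield only the almost-every-$x$ statement of Theorems~\ref{thm: Or1} and~\ref{thm: Or2}, with a nonempty exceptional set of dimension up to $2(n-1)-s$. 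Upgrading ``outside an exceptional set'' to ``for all $x\notin\supp(\lambda)$'' is precisely the content of the conjecture, and you acknowledge at the end that this uniformity is ``the genuinely difficult point'' without supplying an argument for it. The self-convolution step is also not sound as stated: iterating $\lambda=\mu*S_b\lambda$ rescales the measure along its own self-similar grid, but the centre $x$ is arbitrary and is not carried along by that renormalisation, so the claimed geometric series in $b^{\,n-1-\Haus\lambda}$ controlling the collinearity kernel near a fixed bad $x$ does not follow.

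By contrast, the paper's partial result avoids the energy/collinearity kernel entirely: it bounds $\lambda(T^\delta_\theta)/\delta^{n-1}$ by $\int_{R^\delta_\theta}|\hat\lambda(\xi)|\,d\xi$ over a dual slab and integrates in $\theta$ using the pointwise bound $|\{\theta:\xi\in R^\delta_\theta\}|\ll|\xi|^{-1}$, which is why the \emph{$l^1$}-sum of $|\hat\lambda|$ (not the $l^2$-sum) is the quantity that must be small --- and for a general missing digits measure with $\Haus\lambda>n-1$ one only knows $\dim_{l^1}\lambda\le\Haus\lambda$ with no matching lower bound. Your closing remarks on levels~3 and~4 are likewise only heuristic: interpolating $L^2$ against an $L^\infty$-type bound on smoothings presupposes essentially the level-4 conclusion, and even the paper's method needs $\dim_{l^1}\lambda>n-p^{-1}$, a hypothesis that tends to $n$ as $p\to\infty$, so no fixed measure is covered for all $p$. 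In short, the proposal correctly identifies the obstruction but does not overcome it, and the conjecture remains open.
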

\begin{rem}
It might be the case that the difficulty level 1,2,3 statements hold with $\lambda$ being a general self-similar measure with the open set condition. However, this seems to be too strong to hold. We are not able to find counterexamples either. 
\end{rem}
\begin{rem}
If instead of "for all $x\in\mathbb{R}^n\setminus\supp(\lambda)$", we require only "most of $x\in\mathbb{R}^n\setminus\supp(\lambda)$" then the situation is much more clear. In fact, Theorems \ref{thm: Or1}, \ref{thm: Or2} already give such a result. Stronger results establishing the nullity of the dimension of the exceptional sets can be found in \cite{KO, ShmerkinSolomyak}.
\end{rem}
Currently, even the difficulty level 1 problem is open. In this paper, we partially solve the above conjecture up to the difficulty level 3. For clearness, we state our result for a particularly chosen $\lambda.$ In the next section, we will provide general results.

\begin{thmx}\label{thm: A}
    Let $\lambda=\lambda_{p,D}$ be the missing digits measure on $\mathbb{R}$ with base $p=10^{10000}$ and the digit set
    \[
    D=\{1,2,\dots,10^{8000}\}.
    \]
    Then for each $x\notin \supp(\lambda)\times\supp(\lambda),$ $\Pi_x(\lambda\times \lambda)$ is absolutely continuous with a density function in $L^2(S^1).$
\end{thmx}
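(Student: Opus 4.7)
My plan is to show $\mu := \Pi_x(\lambda\times\lambda) \in L^2(S^1)$ by establishing square-summability of its Fourier coefficients. By Parseval on $S^1$, this is equivalent to $\sum_{k\in\ZZ}|\hat\mu(k)|^2<\infty$, where
\[
\hat\mu(k) = \int_{\RR^2} e^{-ik\arg(y-x)}\,d(\lambda\times\lambda)(y).
\]
I will aim for a pointwise bound $|\hat\mu(k)| \lesssim |k|^{-1/2-\epsilon}$ for some $\epsilon > 0$, uniform as $x$ ranges over a compact subset of $\RR^2\setminus(\supp(\lambda)\times\supp(\lambda))$.

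\textbf{Self-similar decomposition and linearisation.} Using the IFS presentation $\lambda = |D|^{-1}\sum_{d\in D}S_d\lambda$ with $S_d(t) = (t+d)/p$, iterate to level $M$:
\[
\lambda\times\lambda = |D|^{-2M}\sum_Q T_Q(\lambda\times\lambda),
\]
where the sum runs over $|D|^{2M}$ affine contractions $T_Q$ of ratio $p^{-M}$. I match $M$ to $k$ by $p^{-M}\asymp|k|^{-1}\cdot\operatorname{dist}(x,\supp(\lambda\times\lambda))$, so that $k\arg(\cdot-x)$ oscillates by $O(1)$ inside each cell. Linearising the phase about the cell centre $c_Q$ gives
\[
\arg(y-x) = \arg(c_Q-x) + |c_Q-x|^{-1}\langle v_Q^\perp, y-c_Q\rangle + O\rbr{|c_Q-x|^{-2}p^{-2M}},
\]
where $v_Q^\perp$ is the unit vector perpendicular to $c_Q-x$. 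The contribution of cell $Q$ to $\hat\mu(k)$ becomes
\[
|D|^{-2M} e^{-ik\arg(c_Q-x)}\,\widehat{\lambda\times\lambda}(\xi_Q) + \mathrm{error},\qquad \xi_Q := \frac{k p^{-M}}{2\pi|c_Q-x|}\,v_Q^\perp,
\]
with the error controlled by the choice of $M$.

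\textbf{Summation.} I then group cells into dyadic annuli around $x$ based on $|c_Q-x|$. The Frostman bound $(\lambda\times\lambda)(B(x,r))\leq Cr^{1.6}$, reflecting $\dim(\lambda\times\lambda)=2\log|D|/\log p=1.6$, caps the number of cells per annulus. Fourier decay of $\lambda\times\lambda$ along the directions $v_Q^\perp$ supplies the cancellation, and summing the resulting geometric series over annular radii yields the target $|k|^{-1/2-\epsilon}$ decay; the numerics $(p,|D|)=(10^{10000},10^{8000})$ provide the quantitative slack needed for $\epsilon>0$.

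\textbf{Main obstacle.} The hardest input is that $\hat\lambda$ does not decay uniformly: it stays close to $1$ along an arithmetic set of frequencies tied to the digits and the base, and cells $Q$ close to $x$ produce $\xi_Q$ of very large magnitude along a single coordinate of $\RR^2$. Overcoming this will require a transversality argument exploiting the product form $\widehat{\lambda\times\lambda}(\xi_1,\xi_2)=\hat\lambda(\xi_1)\hat\lambda(\xi_2)$: although each factor has bad frequencies, generic $\xi$ has at least one coordinate away from the bad set, and for $x\notin\supp(\lambda)\times\supp(\lambda)$ the perpendicular directions $v_Q^\perp$ sweep out a positive-measure subset of $S^1$ as $Q$ varies. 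Converting this geometric transversality into an effective pointwise decay bound, with explicit $\epsilon>0$ extractable from the numerics $(p,|D|)$, is the technical heart of the argument.
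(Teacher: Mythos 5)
Your strategy is genuinely different from the paper's, but it breaks down at exactly the point you flag as the ``technical heart,'' and in fact the cancellation mechanism you propose cannot work as written. With your choice $p^{-M}\asymp |k|^{-1}\operatorname{dist}(x,\supp(\lambda\times\lambda))$, the rescaled frequency satisfies $|\xi_Q|\asymp \operatorname{dist}(x,\supp(\lambda\times\lambda))/|c_Q-x|\ll 1$, so $\widehat{\lambda\times\lambda}(\xi_Q)$ is an $O(1)$ quantity with no decay whatsoever: the Fourier transform of $\lambda\times\lambda$ ``along the directions $v_Q^\perp$'' supplies nothing at these scales. All of the cancellation in $\hat\mu(k)$ would then have to come from the exponential sum $|D|^{-2M}\sum_Q e^{-ik\arg(c_Q-x)}$ over the cell centres, i.e.\ from equidistribution of the angles $\arg(c_Q-x)$ as seen from $x$ --- which is the original problem restated one scale up, not an available input. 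Moreover, the target $|\hat\mu(k)|\lesssim |k|^{-1/2-\epsilon}$ is a pointwise Fourier-decay (Fourier-dimension) statement far stronger than the square-summability you actually need; committing to it makes the ``bad frequency'' problem you describe unavoidable, whereas an averaged formulation can absorb the sparse set of frequencies where $|\hat\lambda|$ stays near $1$.

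That averaging is precisely how the paper proceeds: it never estimates individual Fourier coefficients of the projected measure. It sets $f_\delta(\theta)=(\lambda\times\lambda)(T_\theta^\delta)/\delta$ for tubes $T_\theta^\delta$ through $x$, dominates $f_\delta(\theta)$ by $\int_{R_\theta^\delta}|\widehat{\lambda\times\lambda}(\xi)|\,d\xi$ over the dual slab $R_\theta^\delta$ via a bump function and Parseval, and then computes $\int_{S^1}f_\delta^2\,d\theta$ by Fubini: the set of directions $\theta$ with $\xi\in R_\theta^\delta$ has measure $\ll 1/|\xi|$, whence $\int_{S^1}f_\delta^2\,d\theta\ll\left(\int|\widehat{\lambda\times\lambda}(\xi)||\xi|^{-1/2}\,d\xi\right)^2$, which is finite as soon as $\dim_{l^1}(\lambda\times\lambda)>3/2$ by Lemma \ref{lma: l1 integral}. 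The numerics $(p,\#D)=(10^{10000},10^{8000})$ enter only to check, via Theorem \ref{thm: l1 bound} and superadditivity of $\dim_{l^1}$ under products, that $\dim_{l^1}(\lambda\times\lambda)\geq 2\left(0.8-\log\log p^2/\log p\right)>3/2$. If you want to salvage your route you must replace the pointwise goal by an $L^2$ average over $k$ and find a substitute for the unproved equidistribution of the cell directions; at that point you are essentially reconstructing the paper's tube-counting argument.
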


Let $n\geq 2$ be an integer. Let $\lambda$ be a compactly supported Borel measure on $\mathbb{R}^n.$ Suppose that $x\in\mathbb{R}^n\setminus \supp(\lambda).$ We see that if $\Pi_x(\lambda)$ is absolutely continuous, then $\Pi_x(\supp(\lambda))\subset S^{n-1}$ has positive Lebesgue measure, or equivalently, positive $\mathcal{H}^{n-1}$ measure. If $\Pi_x(\lambda)$ is absolutely continuous with a upper semi continuous density function, then $\Pi_x(\supp(\lambda))$ has non-empty interior. For more applications, see Section \ref{sec: Integers}.

\subsection{Linear Projection} Let $n\geq 2$ be an integer. Let $\theta\in S^{n-1}$. Let $H_\theta$ be the ($n-1$ dimensional)  linear subspace normal to $\theta.$ For each $y\in\mathbb{R}^n,$ there is a point $h_y\in H_\theta$ so that $y\in h_y+\theta\mathbb{R}.$ This $h_y$ is uniquely determined with respect to $y.$ Define the function
\[
P_{\theta}:y\in \mathbb{R}^n\to h_y\in H_\theta.
\]
Thus $P_\theta$ is the linear projection that maps $\mathbb{R}^n$ onto $H_\theta.$ The fibres of this map $P_\theta$ are affine lines with direction $\theta.$ Notice that $P_\theta$ is the same as $P_{-\theta}.$ More generally, it is possible to define linear projections from $\mathbb{R}^n$ onto $k$ subspaces as long as $1\leq k\leq n-1.$ We only treat $k=n-1$ in this paper. In this setting, one has the general Marstrand projection theorem \cite[Section 3.5]{BP}.

\begin{thm}
Let $n\geq 2$ be an integer. Let $\lambda$ be a Borel probability measure on $\mathbb{R}^n.$ Suppose that
\[
I_s(\lambda)=\int\int \frac{d\lambda(x)d\lambda(y)}{|x-y|^s}<\infty
\]
for some $s>n-1.$ Then for Lebesgue almost all $\theta\in S^{n-1},$ $P_\theta(\lambda)$ is absolutely continuous with respect to the Lebesgue measure on $H_\theta$. In particular, $P_\theta(supp(\lambda))$ has positive Lebesgue measure for almost all $\theta\in S^{n-1}.$
\end{thm}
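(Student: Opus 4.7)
The plan is to apply the classical Fourier analytic approach. First I would invoke the standard identity
\[
I_s(\lambda) = c(n,s) \int_{\mathbb{R}^n} |\hat\lambda(\xi)|^2 |\xi|^{s-n} d\xi,
\]
which expresses the $s$-energy of $\lambda$ via its Fourier transform. Thus the hypothesis $I_s(\lambda)<\infty$ translates into a quantitative weighted $L^2$ statement on $\hat\lambda$, with weight $|\xi|^{s-n}$ and $s>n-1$.

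The next observation is that for each direction $\theta\in S^{n-1}$, the Fourier transform of the pushforward measure $P_\theta\lambda$ on $H_\theta$ is simply the restriction $\hat\lambda|_{H_\theta}$. By Plancherel's theorem on the hyperplane $H_\theta$, showing $\hat\lambda|_{H_\theta}\in L^2(H_\theta)$ would give that $P_\theta\lambda$ is absolutely continuous with an $L^2$ density. Hence it suffices to prove that
\[
\int_{S^{n-1}} \int_{H_\theta} |\hat\lambda(\eta)|^2 \, d\mathcal{H}^{n-1}(\eta) \, d\sigma(\theta) < \infty,
\]
from which the inner integral must be finite for $\sigma$-almost every $\theta$.

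To evaluate this double integral, I would parametrise $\eta\in H_\theta$ by polar coordinates $\eta = r\omega$ with $r\geq 0$ and $\omega\in H_\theta\cap S^{n-1}$, then swap the roles of $\theta$ and $\omega$ using Fubini on the Stiefel manifold of orthonormal pairs $(\theta,\omega)$. This yields
\[
\int_{S^{n-1}} \|\hat\lambda|_{H_\theta}\|_{L^2(H_\theta)}^2 \, d\sigma(\theta) = |S^{n-2}| \int_{\mathbb{R}^n} |\hat\lambda(\xi)|^2 |\xi|^{-1} d\xi,
\]
where the factor $|\xi|^{-1}$ arises from the discrepancy between the full $n$-dimensional polar weight $r^{n-1}$ and the $(n-1)$-dimensional weight $r^{n-2}$ on the hyperplane.

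Finally, I would note that the right-hand side is finite under the hypothesis. Since $|\hat\lambda|\leq 1$ (as $\lambda$ is a probability measure) and since $|\xi|^{-1}$ is locally integrable on $\mathbb{R}^n$ for $n\geq 2$, the contribution from $|\xi|\leq 1$ is automatically bounded. For $|\xi|\geq 1$ one uses $|\xi|^{-1} \leq |\xi|^{s-n}$, valid precisely because $s>n-1$, to dominate the tail by a constant multiple of $I_s(\lambda)<\infty$. This proves $\hat\lambda|_{H_\theta} \in L^2(H_\theta)$ for almost every $\theta$ and hence the desired absolute continuity. The only delicate step is the polar-coordinate exchange, which has to be set up with some care on the Stiefel manifold; everything else is a direct application of Plancherel and the Fourier representation of the Riesz energy.
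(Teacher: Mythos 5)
Your proof is correct. Note that the paper does not actually prove this statement: it is the classical Marstrand--Mattila projection theorem, quoted verbatim from the cited reference (Bishop--Peres, Section 3.5), and your argument is precisely the standard Fourier-analytic proof given there and in Mattila's books --- the energy--Fourier identity $I_s(\lambda)=c(n,s)\int|\hat\lambda(\xi)|^2|\xi|^{s-n}\,d\xi$, the observation $\widehat{P_\theta\lambda}=\hat\lambda|_{H_\theta}$, Plancherel on $H_\theta$, and the incidence computation producing the weight $|\xi|^{-1}$, with the split at $|\xi|=1$ using $|\hat\lambda|\leq 1$ near the origin and $|\xi|^{-1}\leq|\xi|^{s-n}$ (valid exactly because $s>n-1$) at infinity. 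The Fubini step on the incidence variety $\{(\theta,\omega)\in S^{n-1}\times S^{n-1}:\theta\perp\omega\}$ that you flag as delicate is indeed fine by rotation invariance of the natural measure there, so there is no gap.
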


As for radial projection, here we also have the following conjecture.

\begin{conj}\label{conj: linear projection}
Let $n\geq 2.$ Let $\lambda$ be a missing digits measure (or a Cartesian product of missing digits measures).  Suppose that $\Haus \lambda>n-1,$ then $P_\theta(\lambda)$ is absolutely continuous with a continuous density function for Lebesgue almost all $\theta\in S^{n-1}$.
\end{conj}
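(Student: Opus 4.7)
The plan is Fourier-analytic. Noting that $\widehat{P_\theta\lambda}(\eta)=\hat{\lambda}(\eta)$ for $\eta\in H_\theta$, and that $H^s(\RR^{n-1})\hookrightarrow C^0$ whenever $s>(n-1)/2$ by Sobolev embedding, it suffices to exhibit some $s>(n-1)/2$ for which $P_\theta\lambda\in H^s(H_\theta)$ holds for almost every $\theta\in S^{n-1}$. Averaging the squared Sobolev norm over $\theta$ and using Fubini (parametrising $H_\theta$ in polar coordinates and swapping $\theta\leftrightarrow\omega$) produces
\[
\int_{S^{n-1}}\|P_\theta\lambda\|_{H^s(H_\theta)}^2\,d\theta \;=\; c_n\int_{\RR^n}|\hat{\lambda}(\xi)|^2(1+|\xi|)^{2s}|\xi|^{-1}\,d\xi,
\]
so the task reduces to bounding this frequency integral; finiteness of the average then yields finiteness for a.e. $\theta$ by Chebyshev.

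The hypothesis $\Haus\lambda>n-1$ alone is insufficient: via Parseval it gives only $\int|\hat{\lambda}(\xi)|^2|\xi|^{s'-n}\,d\xi<\infty$ for $s'<\Haus\lambda$, a weight that \emph{decays} at infinity, whereas we need integrability against the \emph{growing} weight $(1+|\xi|)^{2s-1}$. The missing digits structure must therefore be exploited. Writing
\[
\hat\lambda(\xi) = \prod_{k\geq 1}f_D(\xi/p^k),\qquad f_D(t)=\frac{1}{|D|}\sum_{d\in D}e^{-2\pi i dt},
\]
with the coordinatewise analogue for Cartesian products, one sees that whenever the orbit $(p^k\xi\bmod 1)_k$ stays quantitatively away from the zero locus of $f_D$ the partial products contract, yielding polynomial decay $|\hat\lambda(\xi)|\lesssim(1+|\xi|)^{-\alpha}$ off an exceptional set $E\subset\RR^n$; the exponent $\alpha$ is governed by the size of $D$ and the Diophantine behaviour of base-$p$ expansions.

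My approach is then to split $\RR^n=(\RR^n\setminus E)\cup E$ and treat the two pieces separately. Over $\RR^n\setminus E$, polynomial decay with $\alpha$ sufficiently large makes the weighted integral converge for some $s>(n-1)/2$; achieving this requires converting the hypothesis $\Haus\lambda>n-1$ into a quantitative statement about $|D|$, $p$, and the zero set of $f_D$. Over $E$, a Marstrand-type slicing argument uses smallness of $\boxd E$ to show that for almost every $\theta\in S^{n-1}$, the intersection $H_\theta\cap E$ is negligible and contributes boundedly to the weighted Fourier integral. Combining both pieces gives $P_\theta\lambda\in H^s(H_\theta)$ for almost every $\theta$, hence a continuous density.

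The principal technical obstacle is the quantitative analysis behind the exceptional set $E$ and the decay exponent $\alpha$. For a single missing digits measure $E$ can be handled by classical equidistribution estimates on $(p^k\xi\bmod 1)$, but for Cartesian products $E$ acquires new cross-coordinate resonances absent from either factor; controlling the Fourier decay rate along such mixed directions, and trading dimensional information on $E$ for Fourier decay on its complement, is the most delicate step.
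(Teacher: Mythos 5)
You should first note that the statement you are attempting is Conjecture \ref{conj: linear projection}, which the paper explicitly leaves open: what the paper actually proves is Theorem \ref{thm: n2}, the same conclusion under the strictly stronger hypothesis $\dim_{l^1}\lambda>n-1$, combined with Theorem \ref{thm: l1 bound}, which verifies that hypothesis only for missing digits measures with very large bases and rectangular digit sets (hence the specific constants in Theorem \ref{thm: B}). Your reduction --- $\widehat{P_\theta\lambda}=\hat\lambda|_{H_\theta}$, average a weighted Fourier norm over $\theta$ in polar coordinates to pick up the Jacobian $|\xi|^{-1}$, conclude for a.e.\ $\theta$ --- is sound and is essentially the skeleton of the paper's proof of Theorem \ref{thm: n2} (the paper works with $\int_{H_\theta}|\hat\lambda|$ rather than a Sobolev norm and localises over dyadic annuli with a stripe-counting argument, but that is a cosmetic difference). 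You also correctly diagnose that $\Haus\lambda>n-1$ alone supplies only a decaying $L^2$ weight and cannot close the argument.

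The genuine gap is the step you defer to the end: the claimed pointwise decay $|\hat\lambda(\xi)|\lesssim(1+|\xi|)^{-\alpha}$ off an exceptional set $E$ of small box dimension, with $\alpha$ large enough to beat the growing weight $(1+|\xi|)^{2s}|\xi|^{-1}$, $2s>n-1$. This is not a loose end; it is the whole problem, and as formulated it fails. From $\hat\lambda(\xi)=\prod_{j\ge0}g(\xi/p^j)$ with $g$ being $\ZZ^n$-periodic and $g\equiv 1$ on $\ZZ^n$, one gets $\hat\lambda(p^km)=\hat\lambda(m)$ for every $m\in\ZZ^n$ and every $k\geq 0$, so $E$ necessarily contains all dilates $p^kE_0$ of a fixed set $E_0$ of non-decaying frequencies: it is unbounded and self-replicating across scales, and its intersection with a fixed hyperplane $H_\theta$ must be controlled simultaneously at every scale, which box dimension plus a Marstrand slicing lemma does not deliver. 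Nor can the $E$-contribution be handled by averaging over $\theta$: on $E$ the only available bound is $|\hat\lambda|\le1$, and the annulus contribution $\asymp R^{2s-1}\,|E\cap\{|\xi|\asymp R\}|$ with $2s-1>n-2\ge0$ forces you to prove that the Lebesgue measure of $E$ in the annulus of radius $R$ decays --- which is precisely the quantitative Fourier-decay statement you were trying to avoid. The paper's substitute for your pair $(\alpha,E)$ is the quantity $\dim_{l^1}\lambda$, which controls the \emph{average} of $|\hat\lambda|$ over frequency annuli and so absorbs the exceptional frequencies automatically; but converting $\Haus\lambda>n-1$ into $\dim_{l^1}\lambda>n-1$ (or into your decay-plus-exceptional-set data) is exactly what is not known in general --- for small bases the two dimensions can be far apart --- and is why the conjecture remains open.
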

\begin{rem}
Here, if the reader would like to have a more challenging problem to solve, $\lambda$ can be replaced with a general self-similar measure with the open set condition. 
\end{rem}
Our method treating radial projections provides us with the following by-product which partially solves Conjecture \ref{conj: linear projection}.

\begin{thmx}\label{thm: B}
    Let $\lambda_1$ be the missing digits measure on $\mathbb{R}$ with base $p=10^{10000}$ and the digit set
    \[
    D=\{1,2,\dots,10^{5005}\}.
    \] Let $\lambda_2$ be the missing digits measure on $\mathbb{R}$ with base $p=11^{10000}$ and the digit set
    \[
    D=\{1,2,\dots,11^{5005}\}.
    \]
    Then for almost all $\theta\in S^1,$ $P_\theta(\lambda_1\times\lambda_2),$ $P_\theta(\lambda_1\times\lambda_1)$ are continuous functions. 
\end{thmx}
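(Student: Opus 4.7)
The strategy is Fourier analytic. A compactly supported measure on $\mathbb{R}$ has a continuous density whenever its Fourier transform lies in $L^1(\mathbb{R})$. For $\theta=(\cos\alpha,\sin\alpha)\in S^1$, identifying $H_\theta$ with $\mathbb{R}$ via $\theta^\perp=(-\sin\alpha,\cos\alpha)$, one computes
\[
\widehat{P_\theta(\lambda_1\times\lambda_j)}(t)=\widehat{\lambda_1}(-t\sin\alpha)\,\widehat{\lambda_j}(t\cos\alpha),\qquad j\in\{1,2\},
\]
so it suffices to show, for Lebesgue almost every $\alpha$, that this function of $t$ lies in $L^1(\mathbb{R},dt)$.

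The plan is to exploit the explicit Fourier structure of missing digits measures. Iterating the self-similarity yields
\[
\widehat{\lambda_i}(\xi)=\prod_{k\geq 1}\phi_{D_i}(\xi/p_i^k),\qquad \phi_{D_i}(t)=\frac{1}{|D_i|}\sum_{d\in D_i}e^{-2\pi i dt}.
\]
For a consecutive-digit set $D_i=\{1,\ldots,m_i\}$ the Dirichlet-kernel bound $|\phi_{D_i}(t)|\leq\min(1,(2m_i\|t\|)^{-1})$ forces the bad set $B_i(\delta):=\{\xi:|\widehat{\lambda_i}(\xi)|\geq\delta\}$ to cluster around the lattices $p_i^{k}\mathbb{Z}$ in intervals of width $\lesssim p_i^k/(m_i\delta)$. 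The parameters in Theorem~B are chosen so that $m_i^2/p_i=q_i^{10}$ (with $q_1=10,q_2=11$) is very large, making $B_i(\delta)$ quantitatively very sparse on every dyadic scale.

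The heart of the proof is then a bilinear transversality estimate: the product bad set $B_1(\delta)\times B_j(\delta)$ is a countable union of narrow rectangles centered at the rational points $(p_1^{k_1}n_1,p_j^{k_j}n_j)$, and for Lebesgue almost every $\theta\in S^1$ the line $\{t\theta^\perp:t\in\mathbb{R}\}$ meets these rectangles in a $t$-set of controlled small measure, since such a line hits the rectangle only when $-\tan\alpha$ lies in a small window around $p_1^{k_1}n_1/(p_j^{k_j}n_j)$, a countable family of directions. A dyadic level-set decomposition in both $\delta$ and in the frequency scale, with $\delta$ optimized at each scale by the sparsity bounds for $B_i(\delta)$, then yields $\int_\mathbb{R}|\widehat{P_\theta(\lambda_1\times\lambda_j)}(t)|\,dt<\infty$ for almost every $\theta$, and Fourier inversion supplies the continuous density.

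The main obstacle is that the Hausdorff dimension of $\lambda_1\times\lambda_j$ exceeds $n-1=1$ only by about $10^{-3}$, so any argument based solely on soft energy estimates is doomed: the averaged identity
\[
\int_{S^1}\int_\mathbb{R}|\widehat{P_\theta(\lambda_1\times\lambda_j)}(t)|\,dt\,d\theta\;\asymp\;\int_{\mathbb{R}^2}|\widehat{\lambda_1}(\xi_1)\widehat{\lambda_j}(\xi_2)||\xi|^{-1}d\xi
\]
combined with Cauchy--Schwarz and the Riesz-energy bound $\int|\widehat{\lambda_1\times\lambda_j}|^2|\xi|^{s-2}d\xi<\infty$ for $s<\dim(\lambda_1\times\lambda_j)$ would require $\dim(\lambda_1\times\lambda_j)>2$ to land in $L^1$, which is false by roughly the full $0.999$. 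Bridging this gap by the genuinely arithmetic transversality described above, rather than by averaged energy estimates, is the same mechanism that powers Theorem~A, and adapting it to the linear setting (where the observing point is effectively at infinity) is what yields Theorem~B as a by-product.
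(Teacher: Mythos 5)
Your reduction is the right one and matches the paper's: identify $H_\theta$ with $\RR$, note $\widehat{P_\theta(\lambda_1\times\lambda_j)}(t)=\hat{\lambda}_1(-t\sin\alpha)\hat{\lambda}_j(t\cos\alpha)$, and show this lies in $L^1(dt)$ for a.e.\ $\theta$, whence Fourier inversion gives a continuous density. But the mechanism you propose for the $L^1$ bound rests on a misassessment. You dismiss the averaged identity
\[
\int_{S^1}\int_\RR\bigl|\widehat{P_\theta(\lambda_1\times\lambda_j)}(t)\bigr|\,dt\,d\theta\;\asymp\;\int_{\RR^2}\frac{|\hat{\lambda}_1(\xi_1)\hat{\lambda}_j(\xi_2)|}{|\xi|}\,d\xi
\]
as ``doomed'' because Cauchy--Schwarz against the $L^2$ Riesz energy would need dimension $>2$. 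That is true only if you insist on deriving the $L^1$ average of $|\hat{\lambda}|$ from its $L^2$ average. The paper's entire point is that for consecutive-digit missing digits measures one can bound the $l^1$ average of $|\hat{\lambda}|$ \emph{directly}: Theorem \ref{thm: l1 bound}(2) (proved from exactly the Dirichlet-kernel bound you invoke, but summed over a full period rather than used to describe level sets) gives $\dim_{l^1}\lambda_i\geq \Haus\lambda_i-\log\log p_i^2/\log p_i>1/2$, superadditivity under products gives $\dim_{l^1}(\lambda_1\times\lambda_j)>1$, and Lemma \ref{lma: l1 integral} then makes the right-hand side above finite with $\sigma=1$. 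So the ``gap of roughly $0.999$'' you set out to bridge does not exist once the correct (i.e.\ $l^1$ rather than $l^2$) Fourier-average quantity is used; the soft averaged argument succeeds. (The paper in fact proves more, via the annulus/stripe pigeonhole of Section \ref{sec: planar}: the exceptional set of $\theta$ has Hausdorff dimension $<n-1$, plus a derivative estimate to thicken the line $t\theta^\perp$ into a slab, since an $L^1$ bound on $|\hat{\lambda}|$ over $\RR^2$ does not by itself control its restriction to a measure-zero line.)

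Independently of that, the arithmetic transversality argument you substitute is only a sketch with real gaps. The level set $B_i(\delta)=\{\xi:|\hat{\lambda}_i(\xi)|\geq\delta\}$ is not a single-scale union of intervals around $p_i^k\ZZ$: since $|\hat{\lambda}_i(\xi)|=\prod_{k\geq1}|\phi_{D_i}(\xi/p_i^k)|$ is a product over all scales, the condition $|\hat{\lambda}_i|\geq\delta$ couples the scales multiplicatively and produces a Cantor-like set in frequency space, so the ``countable union of narrow rectangles centred at $(p_1^{k_1}n_1,p_j^{k_j}n_j)$'' description needs substantial work before a Borel--Cantelli argument over directions can be run (which windows, at which scales, with which summable measures?). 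More importantly, even for a ``good'' direction $\theta$ the line meets infinitely many such rectangles across scales, and measure-zero avoidance is not the right goal: you must show the \emph{total} contribution to $\int|\hat{\lambda}_1(-t\sin\alpha)\hat{\lambda}_j(t\cos\alpha)|\,dt$, summed over all frequency scales and all $\delta$-levels, is finite. Your sketch asserts a ``dyadic level-set decomposition\dots optimized at each scale'' but never produces the summable bound, and this is precisely where the work lies. I would recommend replacing the transversality step by the paper's route: prove the $l^1$ Fourier decay over annuli (Theorem \ref{thm: l1 bound}), deduce $\dim_{l^1}(\lambda_1\times\lambda_j)>1$, and conclude by the averaged identity or, for the stronger dimension bound on the exceptional set, by the stripe-counting argument of Section \ref{sec: planar}.
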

This implies, in particular, that $\supp{(\lambda_1)}+T_x\supp(\lambda_2)$ contains non-trivial interior for Lebesgue almost all $x\in\mathbb{R}.$ Here, $T_x$ is the map $y\in\mathbb{R}\to xy\in\mathbb{R}.$ In this direction, we recall the following verison of Palis' conjecture.
\begin{conj}
Let $A,B\subset [0,1]$ be two missing digits sets. Suppose that
\[
\Haus A+\Haus B>1.
\]
Then 'generically', $A+B$ contains non-trivial interior.
\end{conj}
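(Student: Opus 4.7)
The plan is to deduce this conjecture, in the form actually produced by Theorem~B, from the linear projection result itself, by interpreting \emph{generically} as \emph{for Lebesgue almost every scaling of $B$}. Concretely, the target statement is: whenever $A, B$ are missing digits sets falling within the range of Theorem~B, the set $A + tB$ has non-empty interior for Lebesgue-a.e. $t \in \mathbb{R}$. This is precisely the by-product recorded in the paragraph following Theorem~B.

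Attach to $A,B$ natural Bernoulli missing digits measures $\mu_A, \mu_B$ with $\dim \mu_A + \dim \mu_B > 1$. Parametrize $S^1$ by $\theta = (\cos\alpha, \sin\alpha)$ and identify $H_\theta$ with $\mathbb{R}$ via the unit tangent $(-\sin\alpha, \cos\alpha)$; then $P_\theta(x,y) = \cos\alpha\,(y - x \tan\alpha)$. Setting $t = -\tan\alpha$, the pushforward $P_\theta(\mu_A \times \mu_B)$ becomes, after an affine rescaling of $\mathbb{R}$, the law of $y + t x$ where $(x,y) \sim \mu_A \otimes \mu_B$. Theorem~B supplies a continuous density for this pushforward for a.e. $\theta$, and hence for a.e. $t$ the supporting set $B + tA$ (equivalently $A + tB$) contains an open interval. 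The change of variables $\alpha \mapsto -\tan\alpha$ is a diffeomorphism of $(-\pi/2, \pi/2)$ onto $\mathbb{R}$ with locally bounded Jacobian, so null sets in $\alpha$ correspond to null sets in $t$, completing the transport.

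The main obstacle to a stronger conclusion is extending the class of admissible pairs. Theorem~B requires digit sets thick enough to force the underlying Fourier-analytic $L^2$-type estimates, which is substantially more than the bare inequality $\Haus A + \Haus B > 1$. Pushing those estimates all the way to the critical dimension $s = 1$, so as to cover every missing digits pair satisfying the dimension inequality, is the main analytic difficulty and is expected to be hard. A secondary obstruction is conceptual: the parameter space of missing digits sets is discrete, so the method can only substitute a continuous scaling parameter $t$ for the genuine \emph{generic pair} interpretation traditionally intended in Palis' conjecture. Upgrading from '$A + tB$ for a.e. $t$' to the literal '$A + B$ for a generic pair $(A,B)$ of missing digits sets' would require introducing an extra continuous parameter (for instance an affine perturbation of one of the sets) and proving a robust version of Theorem~B along that parameter, which is beyond the reach of the present approach.
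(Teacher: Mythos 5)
This statement is a conjecture that the paper leaves open: no proof is given, and the only supporting evidence the paper offers is precisely the partial result you derive, namely that Theorem~B yields non-empty interior of $A+T_x(B)$ for Lebesgue almost every $x$ when the digit sets are thick enough, with ``generically'' read as ``for a.e.\ scaling parameter.'' Your transport of Theorem~B through the change of variables $\theta\mapsto t=-\tan\alpha$ matches the paper's own remark following Theorem~B, and your account of the remaining obstacles (closing the gap between the $\dim_{l^1}$ hypothesis and the bare inequality $\Haus A+\Haus B>1$, and upgrading a.e.\ $t$ to a genuine genericity over pairs) accurately reflects why the conjecture itself remains unproved.
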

\begin{rem}
This is not a very honest (and not even precise) version of Palis' conjecture. In fact, to be honest, we should say that, 'generically', if $A+B$ has a positive Lebesgue measure then it also has a non-trivial interior. For Cantor sets defined via non-linear dynamics, (the honest version of) Palis' conjecture was proved by Moreira and Yoccoz in \cite{MY}. See \cite{PS}, \cite{T1}, \cite{T2} for more discussions on (the honest and precise version of) Palis' conjecture for (linear) self-similar sets.
\end{rem}

In different settings, the meaning of the word 'generically' can differ from each other. In our situation with missing digits sets, we require that $A+T_x(B)$ contains a non-trivial interior for Lebesgue almost all $x\in\mathbb{R}.$ In this way, we see that Theorem \ref{thm: B} provides us with a special answer. We also suspect that the following much stronger result holds. This will provide us with a very definitive notion of 'genericity'.
\begin{conj}
Let $A,B\subset [0,1]$ be two missing digits sets. Suppose that the base $p_A$ of $A$ and $p_B$ of $B$ are such that
\[
\frac{\log p_A}{\log p_B}\notin\mathbb{Q},
\]
and that
\[
\Haus A+\Haus B>1.
\]
Then $A+T_x(B)$ contains non-trivial interior for all $x\in\mathbb{R}\setminus\{0\}.$ If $\log p_A/\log p_B\in\mathbb{Q},$ then for each $x\notin\mathbb{Q},$
$
A+T_x(B)
$ contains non-trivial interior.
\end{conj}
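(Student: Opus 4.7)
The plan is to reduce the conjecture to a Fourier-analytic integrability estimate. Since $\supp(\lambda_A \ast (T_x)_\ast \lambda_B) = A + T_x(B)$, the set $A + T_x(B)$ contains an open interval as soon as $\lambda_A \ast (T_x)_\ast \lambda_B$ is absolutely continuous with a continuous density. By Fourier inversion and the Riemann-Lebesgue lemma, it therefore suffices to show
\[
\int_{\mathbb{R}} \abs{\hat{\lambda}_A(\xi)}\cdot\abs{\hat{\lambda}_B(x\xi)}\, d\xi < \infty,
\]
since the inverse Fourier transform of an $L^1$ function is continuous.

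I would then exploit the infinite product representation $\hat{\lambda}_p(\xi) = \prod_{j\geq 1}G_p(\xi/p^j)$, where $G_p(\xi)= \abs{D}^{-1}\sum_{d\in D}e^{2\pi i d\xi}$ is a $1$-periodic trigonometric polynomial. Missing digits measures are in general \emph{not} Rajchman: $\abs{\hat{\lambda}(\xi)}$ can remain bounded below along the sequence $\xi = p^N$ (direct computation gives $\hat{\lambda}(p^N)=\prod_{k\geq 1}G_p(p^{-k})$ independently of $N$), and by continuity stays large on small neighbourhoods of these \emph{resonant} frequencies. Off these resonances, equidistribution of the orbit $\{\xi/p^j \bmod 1\}_{j\geq 1}$ yields polynomial Fourier decay with exponent controlled by $\Haus\lambda$. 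A dyadic decomposition of the integral then reduces matters to (i)~the non-resonant part, where the product of two polynomial decays is easily integrable when $\Haus A + \Haus B > 1$; and (ii)~the resonant parts for $\lambda_A$ (near $\xi\approx p_A^N$) and for $\lambda_B$ (near $x\xi\approx p_B^M$), where one factor may be of order one but the other must be small, provided the two resonant sets are disjoint at every scale.

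The crux is precisely this non-overlap, and this is where the hypothesis $\log p_A/\log p_B\notin\mathbb{Q}$ enters. For each fixed $x\neq 0$, an effective lower bound on linear forms in logarithms (Baker's theorem) yields
\[
\abs{N\log p_A - M\log p_B + \log\abs{x}}\geq c\cdot(\max(N,M))^{-C}
\]
for effective constants $c=c(x)>0$ and $C>0$. This gives a quantitative gap between the two families of resonant frequencies, which combined with the polynomial Fourier decay bands around each resonance should deliver the required $L^1$ integrability. The main obstacle is reaching the full threshold $\Haus A+\Haus B>1$ rather than something strictly larger: Baker's bound is only polynomial in $\log\max(N,M)$, while one has to sum contributions over all resonant pairs on all scales, and the widths of the resonant bands depend on $\Haus A,\,\Haus B$ in a way that is hard to balance tightly. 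Fresh input, for instance a flattening lemma for self-convolutions of missing digits measures or the additive-combinatorial machinery underlying Theorems~A and B above, is likely required. The case $\log p_A/\log p_B\in\mathbb{Q}$ with $x\notin\mathbb{Q}$ should be handled in parallel, with the quantitative equidistribution of $\{n\log\abs{x}/\log p\}$ (via the Erd\H{o}s--Tur\'an inequality or the three-distance theorem) playing the role of Baker's theorem.
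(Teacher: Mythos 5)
This statement is an open conjecture in the paper; the author explicitly writes that ``we are nowhere near this conjecture,'' so there is no proof to compare against and your proposal must stand on its own. The reduction to $\int_{\mathbb{R}}|\hat{\lambda}_A(\xi)||\hat{\lambda}_B(x\xi)|\,d\xi<\infty$ is sound as a sufficient condition (it is exactly the mechanism behind Theorem B), but the two estimates you would need to carry it out both fail. First, off the resonant frequencies a missing digits measure does \emph{not} have pointwise polynomial Fourier decay with exponent tied to $\Haus\lambda$: for generic $\xi$ the decay is only logarithmic, and polynomial decay holds only on average over dyadic annuli, which is precisely what $\dim_{l^1}$ quantifies. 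Since Cauchy--Schwarz against $\int_{|\xi|\leq R}|\hat{\lambda}|^2\,d\xi\asymp R^{1-s}$ gives only $\int_{|\xi|\leq R}|\hat{\lambda}|\,d\xi\ll R^{1-s/2}$, one can have $\dim_{l^1}\lambda$ as small as $\Haus\lambda/2$, and for small bases (e.g.\ the middle-thirds Cantor measure) $\dim_{l^1}$ really is far below $\Haus$. The $L^1$-Fourier route therefore cannot reach the conjectured threshold $\Haus A+\Haus B>1$; at best it proves the statement under the strictly stronger hypothesis $\dim_{l^1}\lambda_A+\dim_{l^1}\lambda_B>1$, which is why the paper's unconditional results require enormous bases and rectangular digit sets.

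Second, and more fatally, the resonant set is not the sparse sequence $\{p_A^N\}$. Writing $\hat{\lambda}_A(\xi)=\prod_{j\geq 1}G(\xi/p_A^j)$ with $G$ one-periodic, one gets $\hat{\lambda}_A(p_A^N m)=\hat{\lambda}_A(m)$ for every integer $m$ and every $N\geq 0$, so each dyadic annulus $[R,2R]$ contains, for every $N$, about $R/p_A^N$ frequencies in an arithmetic progression of gap $p_A^N$ at which $|\hat{\lambda}_A|$ is of order one (with neighbourhoods of comparable size by continuity). The resonant set of $\hat{\lambda}_B(x\cdot)$ is likewise a union of progressions with gaps $p_B^M/x$. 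Baker's theorem separates the isolated points $p_A^N$ and $p_B^M/x$, but two arithmetic progressions with incommensurable gaps still meet in roughly $R\cdot x/(p_A^Np_B^M)$ points of $[R,2R]$, and no lower bound for linear forms in logarithms prevents this; summing these intersections over all pairs $(N,M)$ and all scales is exactly the unresolved obstruction. Your own closing admission that ``fresh input is likely required'' is accurate: the skeleton is the standard one, but both load-bearing steps (pointwise polynomial decay off resonances, and disjointness of the resonant sets via Baker) are false as stated, and the conjecture remains open.
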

Although we are nowhere near this conjecture, the following result gives us hope.
\begin{thm}[Furstenberg's conjecture \cite{Fu2}]
Let $A,B\subset [0,1]$ be two missing digits sets. Suppose that the base $p_A$ of $A$ and $p_B$ of $B$ are such that
\[
\frac{\log p_A}{\log p_B}\notin\mathbb{Q},
\]
and that
\[
\Haus A+\Haus B>1.
\]
Then $A+T_x(B)$ has full Hausdorff dimension for all $x\in\mathbb{R}\setminus\{0\}.$ If $\log p_A/\log p_B\in\mathbb{Q},$ then for each $x\notin\mathbb{Q},$
$
A+T_x(B)
$ has full Hausdorff dimension.
\end{thm}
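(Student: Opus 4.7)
The plan is to reformulate the conclusion as a statement about linear projections of the product set $A \times B$, and then apply the scenery/CP-chain machinery of Hochman-Shmerkin. For $x \neq 0$, the map $\pi_x : (a,b) \mapsto a + xb$ is (a reparameterization of) the orthogonal projection $P_{\theta(x)}$ onto the line spanned by $(1,x)/\sqrt{1+x^2}$, so $A + T_x(B) = \pi_x(A \times B)$ and we may work with $\dim_H P_{\theta(x)}(A \times B)$. Since $\Haus(A \times B) = \Haus A + \Haus B > 1$ (using that missing digits sets are $AD$-regular, so $\Haus$ of the product equals the sum), full Hausdorff dimension of the sumset is equivalent to $\dim_H P_{\theta(x)}(A \times B) = 1$ for every admissible $\theta(x)$, i.e. every $\theta$ outside the coordinate directions.

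Next I would lift the problem to measures: let $\mu_A, \mu_B$ be the canonical uniform self-similar measures on $A, B$, and set $\mu = \mu_A \otimes \mu_B$, so $\Haus \mu = \Haus A + \Haus B$. By the local entropy averages / scenery flow theorem, it suffices to show that the entropy dimension of $P_\theta \mu$ equals $1$ (equivalently, the energy dimension is $\geq 1$) for every non-coordinate $\theta$. The product $\mu$ is invariant under the scaling dynamics $T_{p_A} \times T_{p_B}$ on $[0,1]^2$, so generic magnifications of $\mu$ produce a CP-distribution $Q$ whose typical micro-measures are again $\times p_A$-invariant in the first coordinate and $\times p_B$-invariant in the second. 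The Kakutani-style skew product generated by the two scales is ergodic precisely when $\log p_A/\log p_B \notin \mathbb{Q}$ (Furstenberg's $\times p \times q$ rigidity); this forces $Q$ to spread out in all non-coordinate directions, hence every non-coordinate projection of $Q$-typical micro-measures has entropy dimension equal to $\min(1, \Haus \mu) = 1$.

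Combining the previous step with dimension conservation for projections of CP-distributions yields $\dim_H P_\theta \mu = 1$ uniformly in non-coordinate $\theta$, and then through $\mu = \mu_A \otimes \mu_B$ with full support $A \times B$ we conclude $\Haus(A + T_x B) = 1$ for every $x \neq 0$. In the rational-ratio case $\log p_A/\log p_B = m/n$, pass to the common base $p^n = p_A^n = p_B^m$ so that $A \times B$ becomes a self-affine product under a single $T_{p^n} \times T_{p^n}$ action; for irrational $x$, the direction $\theta(x)$ is "generic" with respect to this action in the sense that its orbit under the induced projective dynamics $\theta \mapsto M_{p^n}(\theta)$ is equidistributed, and repeating the CP-chain argument over this orbit (which avoids the finite set of resonant rational directions) again forces the full-dimension conclusion.

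The main obstacle is step two: verifying that the limit CP-distribution $Q$ is genuinely two-dimensional in the sense required to drive all non-coordinate projections to dimension one. This is where the arithmetic hypothesis $\log p_A/\log p_B \notin \mathbb{Q}$ is indispensable — without it, $Q$ can concentrate on measures aligned with a single coordinate axis, and exceptional rational directions $\theta$ are possible. The delicate point is to turn Furstenberg's topological rigidity into quantitative equidistribution of scenery, which in the published proof is handled by the local entropy averages method; any alternative would need a substitute for that ergodic input.
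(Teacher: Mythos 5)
First, a point of comparison: the paper does not prove this theorem at all --- it is quoted from the literature, with the irrational-ratio case attributed to Hochman--Shmerkin \cite{HS12} and the rational-ratio case to Hochman \cite{H14} and Shmerkin \cite{Sh}. Your sketch of the irrational-ratio case is essentially a summary of the \cite{HS12} strategy and is sound at that level of detail: reduce to projections of $\mu_A\otimes\mu_B$, generate a CP-distribution by magnifying along approximate common powers $p_A^k\approx p_B^l$, observe that the residual ratio acts on the slope parameter with dense orbits precisely because $\log p_A/\log p_B\notin\mathbb{Q}$, and combine lower semicontinuity of the expected projected entropy $E(\theta)$ with its almost-everywhere value $\min(1,\Haus\mu)$ to force $E(\theta)=1$ for every non-coordinate $\theta$. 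One correction of emphasis: the step that converts this into a bound on $\Haus P_\theta\mu$ is the local entropy averages inequality $\Haus P_\theta\mu\geq E(\theta)$, not ``dimension conservation,'' which is a different statement.

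The rational-ratio case as you present it has a genuine gap. After passing to the common power $N=p_A^{n}=p_B^{m}$, the magnification becomes conformal and the induced dynamics on the direction parameter is the \emph{identity}: the slope $\theta(x)$ is fixed under magnification, its ``orbit'' is a single point, and there is no equidistribution of directions to exploit. The CP-chain argument therefore collapses onto the one (possibly exceptional) direction $\theta(x)$ and gives nothing beyond the Marstrand almost-everywhere statement. This is exactly why that case required different machinery: $A+T_x(B)$ is then the attractor of a single homogeneous self-similar IFS on $\mathbb{R}$ with overlaps, whose translation parameters are rational combinations of $1$ and $x$, so exact overlaps force $x\in\mathbb{Q}$, and one needs Hochman's inverse theorem for entropy \cite{H14} together with the $L^q$-spectrum analysis of \cite{Sh} to upgrade this to full dimension for every irrational $x$. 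Your proposal does not supply a substitute for that input, so as written it only covers the irrational-ratio half of the statement.
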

For the case when $\log p_A/\log p_B\notin\mathbb{Q},$ the result was proved in \cite{HS12}. For the other case, the result was proved in \cite{H14} as well as in \cite{Sh}.

\section{General result}
The general result depend on the notion of $\dim_{l^1}.$ See Section \ref{sec: L1}. For now, we remark that for missing digits measures $\lambda$ with large bases and simple digits sets,  $\dim_{l^1} \lambda$ is almost equal to $\Haus \lambda.$

\begin{thm}\label{thm: n1}
    Let $n\geq 2$ be an integer. Let $\lambda$ be a Borel probability measure on $\mathbb{R}^n.$
    
    Suppose that
    \[
    \dim_{l^1}\lambda>n-1.
    \]
    For each $x\in\mathbb{R}^n\setminus \supp(\lambda),$ if $\Haus\Pi_x(\lambda)=n-1$ then $\Pi_x(\lambda)$ is absolutely continuous. 
    
    Suppose that for an integer $p>1,$
    \[
    \dim_{l^1}\lambda>n-p^{-1}.
    \]
    Then for each $x\in\mathbb{R}^n\setminus \supp(\lambda),$ $\Pi_x(\lambda)$ is absolutely continuous with a density function in $L^{p}(S^{n-1}).$ In this case, the condition that $\Haus \Pi_x(\lambda)=n-1$ is not required to draw the conclusion.
\end{thm}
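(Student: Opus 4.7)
The strategy for both parts is to bound the $L^p$-norm of a smoothed radial projection and then extract the density by weak-$*$ compactness. Fix $x \notin \supp(\lambda)$, let $\psi_\epsilon$ be a unit-mass bump on $S^{n-1}$ of radius $\epsilon$, and set $\mu_\epsilon := \Pi_x(\lambda) * \psi_\epsilon$. Expanding the $p$-th power and integrating out the sphere variable yields
\begin{equation*}
\|\mu_\epsilon\|_{L^p}^p \,\lesssim\, \epsilon^{-(p-1)(n-1)} \int \lambda(T(x,y,\epsilon))^{p-1}\, d\lambda(y),
\end{equation*}
where $T(x,y,\epsilon)$ is the angular tube of opening $\epsilon$ around the ray through $x$ and $y$. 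Since $x \notin \supp(\lambda)$ and $\supp(\lambda)$ is compact, this is effectively a Euclidean $\epsilon$-tube, so matters reduce to a tube estimate on $\lambda$.

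The core of the argument for the second statement is to show that the right-hand side above is $O(\epsilon^{(p-1)(n-1)})$. I would fix a direction $\theta \in S^{n-1}$, partition $\RR^n$ into parallel $\epsilon$-tubes $\{T_k^\theta\}_{k \in \ZZ^{n-1}}$, and note that after parameterizing the tube direction by $(y-x)/|y-x|$ and averaging over $y$, the integral is controlled by an integral over $\theta$ of the sum $\sum_k \lambda(T_k^\theta)^p$. For integer $p$, this sum expands Fourier-analytically as a $p$-fold sum involving $\hat\lambda$ against Fourier transforms of tube indicators, which are plate-shaped on scale $\epsilon^{-1}$. The hypothesis $\dim_{l^1}\lambda > n - 1/p$, which asserts $\ell^1$-summability of $\hat\lambda$ at the relevant scale, should then extract the desired bound.

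For the first statement, the weaker hypothesis $\dim_{l^1}\lambda > n-1$ gives that $\Pi_x(\lambda)$ has finite $(n-1-\delta)$-energy for every $\delta > 0$. Combined with the a priori assumption $\Haus \Pi_x(\lambda) = n-1$, a density-point / Frostman argument on $S^{n-1}$ upgrades the energy bound to absolute continuity: any singular part would need to live on an $\mathcal{H}^{n-1}$-null set of Hausdorff dimension $n-1$, but the energy bound prevents the concentration of mass this would require. The main obstacle is the Fourier step: translating the $\ell^1$-Fourier-dimension condition into the tube $L^p$-estimate. The tube direction varies continuously with $y$, and the $p$-fold Fourier integrals must be organized to exploit $\ell^1$ rather than $\ell^2$ Fourier summability of $\hat\lambda$ — it is this $\ell^1$ gain that is responsible for the precise threshold $n - 1/p$ in the hypothesis.
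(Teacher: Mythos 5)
Your treatment of the second statement (the $L^p$ case, $p\geq 2$) follows essentially the same route as the paper: replace $\Pi_x(\lambda)(B_\delta(\theta))$ by the tube mass $\lambda(T^\delta_\theta)$, majorize the tube indicator by a bump function whose Fourier transform lives on the dual plate $R^\delta_\theta$ (thin in direction $\theta$, of extent $\delta^{-1}$ in the orthogonal directions), expand the $p$-th power, and integrate out $\theta$. The one ingredient you flag as "the main obstacle" is precisely the step the paper supplies: the geometric estimate $\left|\{\theta\in S^{n-1}:\xi\in R^\delta_\theta\}\right|\ll |\xi|^{-1}$, whence $h(\xi_1,\dots,\xi_p)\ll \max_i|\xi_i|^{-1}\leq\prod_i|\xi_i|^{-1/p}$, which factorizes the $p$-fold integral into $\left(\int|\hat\lambda(\xi)||\xi|^{-1/p}\,d\xi\right)^p$; this is finite exactly when $\dim_{l^1}\lambda>n-p^{-1}$ (Lemma \ref{lma: l1 integral}). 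So for this half your outline is correct but incomplete at the decisive point.

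The first statement is where there is a genuine gap. You argue: $\dim_{l^1}\lambda>n-1$ gives $\Pi_x(\lambda)$ finite $(n-1-\delta)$-energy for every $\delta>0$, and together with $\Haus\Pi_x(\lambda)=n-1$ a "density-point / Frostman argument" upgrades this to absolute continuity. This implication is false. Finite $s$-energy for all $s<n-1$ only forces $\Pi_x(\lambda)$ to annihilate sets of Hausdorff dimension strictly below $n-1$; it says nothing against the measure being carried by a set of dimension exactly $n-1$ and Lebesgue measure zero. There are standard examples of singular measures on $S^{n-1}$ (indeed on $[0,1]$) with finite $s$-energy for every $s<n-1$ and lower Hausdorff dimension equal to $n-1$; your "any singular part would need to live on an $\mathcal{H}^{n-1}$-null set of Hausdorff dimension $n-1$, but the energy bound prevents this" has no content, because the energy bound prevents nothing on such sets. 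The paper's actual argument is of a different nature: for each dyadic scale $R=2^k$ it isolates the set $Q_R(\epsilon)$ of directions $\theta$ for which the Fourier mass of $\lambda$ on the annulus $A_R$ intersected with the dual plate of $\theta$ exceeds $R^{n-1-s_1+2\epsilon}$, shows by a double-counting/pigeonhole argument that $Q_R(\epsilon)$ is covered by $\ll R^{n-1-\epsilon}$ balls of radius $4/R$, and deduces via the Hausdorff--Borel--Cantelli lemma that $\limsup_k Q_{2^k}(\epsilon)$ has Hausdorff dimension strictly less than $n-1$. The hypothesis $\Haus\Pi_x(\lambda)=n-1$ is used only to conclude that this exceptional direction set is $\Pi_x(\lambda)$-null; for every other $\theta$ one gets $\int_{P^{1/100}_\theta}|\hat\lambda(\xi)|\,d\xi<\infty$, hence $\limsup_{\delta\to0}\lambda(T^\delta_\theta)/\delta^{n-1}<\infty$, and absolute continuity follows from the differentiation theorem for measures. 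You would need to replace your energy argument with something of this kind; as written, the first half of your proof does not go through.
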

\begin{rem}
In some cases, the condition that $\Haus\Pi_x(\lambda)=n-1$ is automatic. For example, let $\lambda$ be a missing digits measure on $\mathbb{R}^2$ with $\Haus \lambda>1.$ Then \cite[Theorem 6.2]{Sh} implies that for each line $l\subset\mathbb{R}^2$ with irrational slope, for each $\epsilon>0,$ $\lambda(l^\delta)\ll \delta^{1-\epsilon}.$ This implies that for each $x\notin\supp(\lambda),$ each irrational $\theta\in S^1$ ($\theta=(\theta_1,\theta_2)$ with $\theta_1\theta_2\neq 0, ,\theta_1/\theta_2\notin\mathbb{Q}$), $\Pi_x(\lambda)(B_\delta(\theta))\ll \delta^{1-\epsilon}.$ Since $\Haus \lambda>1,$ $\lambda$ does not give positive measures for lines. This implies that $\Pi_x(\lambda)$ does not give positive measure for points. This implies further that the set of rational points in $S^1$ has zero $\Pi_x(\lambda)$ measure. This implies that $\Haus \Pi_x(\lambda)\geq 1-\epsilon$. Since $\epsilon>0$ can be arbitrarily chosen we see that $\Haus \Pi_x(\lambda)=1.$

Similarly (\cite[Theorem 7.2]{Sh}), suppose that $\lambda$ is a Cartesian product of two missing digits measures on $\mathbb{R}$ with bases $b_1,b_2$ such that $\log b_1/\log b_2\notin\mathbb{Q}.$ If $\Haus \lambda>1,$ then for each line $l\subset\mathbb{R}^2$ with non-trivial slope, i.e. not being parallel with the coordinate axis, then for each $\epsilon>0,$ $\lambda(l^\delta)\ll \delta^{1-\epsilon}.$ From here we again have $\Haus \Pi_x(\lambda)=1$ for all $x\notin\supp(\lambda).$

It is very likely that the aforementioned results in \cite{Sh} can be generalised to $\mathbb{R}^n$ for $n\geq 3.$ If this would be the case, then the condition $\Haus \Pi_x(\lambda)=n-1$ would be automatic if $\lambda$ is a missing digits measure or a Cartesian product of missing digits measures in lower-dimensional Euclidean spaces.
\end{rem}
It is not easy to obtain $\dim_{l^1}\lambda$ for general measures $\lambda.$ However, for missing digits measures, it is possible to obtain useful estimates. See Theorem \ref{thm: l1 bound}. Theorem \ref{thm: A} follows from Theorems \ref{thm: l1 bound} and the above theorem.

On the other hand, if $\lambda$ is a missing digits measure on $\mathbb{R}^n$ with $\Haus \lambda<n-1.$ Then $\Pi_x(\lambda)$ cannot be absolutely continuous for each $x\notin\supp(\lambda).$ Thus Theorem \ref{thm: n1} is almost sharp. In general, we have $\dim_{l^1}\lambda\leq \Haus \lambda$. In some cases, they can be almost the same. See Theorem \ref{thm: l1 bound}.

\begin{thm}\label{thm: n2}
    Let $n\geq 2$ be an integer. Let $\lambda$ be a Borel probability measure on $\mathbb{R}^n.$ Suppose that
    \[
    \dim_{l^1}\lambda>n-1.
    \]
    Then for almost all $\theta\in S^{n-1},$ $P_\theta(\lambda)$ is absolutely continuous with a continuous density function.
\end{thm}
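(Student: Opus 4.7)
The plan is to carry out a Fourier-analytic integration argument analogous to the classical proof of Marstrand's projection theorem, but with an $L^1$-type bound on $\hat{\lambda}$ instead of the usual $L^2$-type (energy) bound; this upgrades absolute continuity to a \emph{continuous} density. Identifying each hyperplane $H_\theta$ with $\mathbb{R}^{n-1}$, the Fourier transform of $P_\theta(\lambda)$ satisfies
$$\widehat{P_\theta(\lambda)}(\eta) = \hat{\lambda}(\eta), \quad \eta \in H_\theta.$$
A standard consequence of Fourier inversion is that if $\int_{H_\theta} |\hat{\lambda}(\eta)|\,d\eta < \infty$, then $P_\theta(\lambda)$ is absolutely continuous with continuous density. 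So it suffices to establish this finiteness for Lebesgue-almost every $\theta \in S^{n-1}$.

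By Tonelli, it is enough to show
$$\int_{S^{n-1}} \int_{H_\theta} |\hat{\lambda}(\eta)|\, d\eta\, d\sigma(\theta) < \infty.$$
Using polar coordinates in each $H_\theta$ (writing $\eta = r\omega$ with $r \geq 0$ and $\omega \in S^{n-1} \cap H_\theta$), swapping the $\theta$- and $\omega$-integrations by the symmetry of the incidence set $\{(\theta, \omega) \in S^{n-1} \times S^{n-1} : \theta \perp \omega\}$, and reassembling via polar coordinates on $\mathbb{R}^n$, this double integral equals
$$c_n \int_{\mathbb{R}^n} |\hat{\lambda}(\xi)|\, |\xi|^{-1}\, d\xi$$
for some constant $c_n > 0$ depending only on $n$. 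Near the origin the integrand is harmless, since $|\xi|^{-1}$ is integrable in dimension $n \geq 2$ and $\hat{\lambda}$ is bounded.

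For large $|\xi|$, I decompose into dyadic shells $\{2^k \leq |\xi| < 2^{k+1}\}$. On such a shell $|\xi|^{-1} \leq 2^{-k}$, so its contribution is at most $2^{-k} \int_{|\xi| \leq 2^{k+1}} |\hat{\lambda}(\xi)|\, d\xi$. The definition of $\dim_{l^1} \lambda$ in Section \ref{sec: L1} is designed so that for every $\epsilon > 0$ one has a bound of the form $\int_{|\xi| \leq R} |\hat{\lambda}(\xi)|\, d\xi \lesssim_\epsilon R^{n - \dim_{l^1}\lambda + \epsilon}$. Granting this, the $k$-th shell contributes $\lesssim 2^{k(n - 1 - \dim_{l^1}\lambda + \epsilon)}$, and the geometric series converges once $\epsilon$ is chosen small enough, using the hypothesis $\dim_{l^1}\lambda > n - 1$.

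The only real obstacle is bridging the $\dim_{l^1}$ hypothesis to the ball-integral estimate for $|\hat{\lambda}|$; this is exactly the content of Section \ref{sec: L1} and is where the subtlety lies. Everything else is a clean polar-coordinate plus dyadic-summation computation, which is why this result emerges as a by-product of the radial-projection machinery: the linear case sidesteps the observation-point geometry and the singularity of $\Pi_x$ that make Theorem \ref{thm: n1} substantially harder.
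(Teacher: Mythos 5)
Your argument is correct, but it takes a genuinely different route from the paper. You run the classical Marstrand-style averaging: Tonelli plus polar coordinates converts $\int_{S^{n-1}}\int_{H_\theta}|\hat{\lambda}(\eta)|\,d\eta\,d\sigma(\theta)$ into $c_n\int_{\mathbb{R}^n}|\hat{\lambda}(\xi)||\xi|^{-1}\,d\xi$ (your exponent $-1$ is right: the fibre measure contributes $r^{n-2}\,dr$ against $r^{n-1}\,dr$ for $\mathbb{R}^n$), and the latter is finite near the origin trivially and at infinity by exactly Lemma \ref{lma: l1 integral} with $\sigma=1$, which is admissible since $\dim_{l^1}\lambda>n-1$; so the "bridging" step you flag as the only obstacle is already supplied by the paper and you need not reprove it. The paper instead recycles the machinery of the radial-projection $L^1$ argument: via dyadic annuli, a stripe-counting pigeonhole, and the Hausdorff--Borel--Cantelli lemma it shows (equation (\ref{eqn: L^1 Fourier})) that the slab integrals $\int_{P^{1/100}_\theta}|\hat{\lambda}(\xi)|\,d\xi$ are finite for every $\theta$ outside a set $E$ of Hausdorff dimension strictly less than $n-1$, and then passes from the slab to the hyperplane slice using a uniform bound on $\partial_t\hat{\lambda}$ in the normal direction (coming from compact support). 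The trade-off: the paper's route yields a quantitatively smaller exceptional set (Hausdorff dimension $<n-1$, not merely Lebesgue-null) and identifies it with the exceptional set already constructed for Theorem \ref{thm: n1}, whereas your route is shorter, avoids the Borel--Cantelli and slab-to-slice steps entirely, and proves precisely the almost-everywhere statement of the theorem. Two minor points worth a sentence each in a write-up: measurability of $\theta\mapsto\int_{H_\theta}|\hat{\lambda}|$ for Tonelli (immediate since $\hat{\lambda}$ is continuous), and the passage from the shifted lattice sums in the definition of $\dim_{l^1}$ to the ball integral (integrate the supremum over shifts $\theta\in[0,1]^n$, as in the proof of Lemma \ref{lma: l1 integral}).
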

Theorem \ref{thm: B} follows as a corollary of this Theorem and Theorem \ref{thm: l1 bound}.

\section{Preliminaries}
For more background materials in fractal geometry, see \cite{Fa}, \cite{Ma1}, \cite{Ma2}.
\subsection{Hausdorff dimension/measure}
Let $n\geq 1$ be an integer. Let $F\subset\mathbb{R}^n$ be a Borel set. Let $g: [0,1)\to [0,\infty)$ be a continuous function such that $g(0)=0$. Then for all $\delta>0$ we define the  quantity
\[
\mathcal{H}^g_\delta(F)=\inf\left\{\sum_{i=1}^{\infty}g(\mathrm{diam} (U_i)): \bigcup_i U_i\supset F, \mathrm{diam}(U_i)<\delta\right\}.
\]
The $g$-Hausdorff measure of $F$ is
\[
\mathcal{H}^g(F)=\lim_{\delta\to 0} \mathcal{H}^g_{\delta}(F).
\]
When $g(x)=x^s$ then $\mathcal{H}^g=\mathcal{H}^s$ is the $s$-Hausdorff measure and Hausdorff dimension of $F$ is
\[
\Haus F=\inf\{s\geq 0:\mathcal{H}^s(F)=0\}=\sup\{s\geq 0: \mathcal{H}^s(F)=\infty          \}.
\]

Let $\mu$ be a Borel probability measure on $\mathbb{R}^n.$ We define the upper/lower Hausdorff dimension of $\mu$ as follows,
\[
\overline{\Haus} \mu=\inf\{\Haus A: \mu(A)=1\},
\underline{\Haus} \mu=\inf\{\Haus A: \mu(A)>0\}.
\]
It is known that
\[
\underline{\Haus}\mu=\mathrm{essinf}_{x\sim \mu} \liminf_{\delta\to 0}\log \mu(B_\delta(x))/\log \delta.
\]
In particular, if $\underline{\Haus}\mu=s>0$ then $\mu(A)=0$ for all $A$ with $\Haus A<s.$

In this paper, we only consider the lower Hausdorff dimension. For this reason, we will write
\[
\Haus \mu=\underline{\Haus} \mu.
\]
\subsection{Missing digits sets/measures}\label{sec: missing}
We introduce the notion of missing digits sets. Let $n\geq 1$ be an integer. Let $p\geq 3$ be an integer. Let $D\subset\{0,\dots,p-1\}^{n}.$ Consider the  set
\[
K_{p,D}=\cl\{x\in\mathbb{R}^n: [p\{p^kx\}]\in  D,k\geq 0\},
\] 
where $\{x\},[x]$ are the component wise fractional part, integer part respectively of $x\in\mathbb{R}^n.$
Let $p_1,\dots,p_{\#D}$ be a probability vector, i.e. they are non-negative and sum to one. We can then assign each element in $D$ a probability weight. To be specific, one can first introduce an ordering on $D$ and assign the probabilities accordingly.  We can now construct the random sum
\[
S=\sum_{i\geq 1} p^{-i} \mathbf{d}_i
\]
where $\mathbf{d}_i\in D,i\geq 1$ are randomly and independently chosen from the set $D$ with the assigned probabilities.

If $p_1=\dots=p_{\#D}=1/\#D,$ the distribution of $S$ is a Borel probability measure supported on $[0,1]^n.$ We call this measure to be $\lambda_{p,D}.$ It is a Borel probability measure supported on $K_{p,D}\cap [0,1]^n.$ Moreover, it is AD-regular with exponent $\Haus K_{p,D}.$ We also write
\[
\dim_{l^1}K_{p,D}=\dim_{l^1}\lambda_{p,D}.
\]
\subsection{AD-regularity}\label{sec: AD}
Let $n\geq 1$ be an integer. Let $\mu$ be a Borel measure. Let $s>0$ be a number. We say that $\mu$ is $s $-regular, or AD-regular with exponent $s$ if there is a constant $C>1$ such that for all $x\in \supp(\mu)$ and all small enough $r>0$
\[
C^{-1} r^s\leq \mu(B_r(x))\leq C r^s,
\] 
where $B_r(x)$ is the Euclidean ball of radius $r$ and centre $x$. For an AD-regular measure $\mu$, the exponent can be seen as
\[
s=\Haus \supp(\mu)=\Haus \mu.
\]
Missing digits measure $\lambda_{p,D}$ in $\mathbb{R}^n$ are AD-regular measures with exponent
\[
s=\Haus \lambda_{p,D}=\Haus K_{p,D}=\frac{\log \#D}{\log p^n}.
\]
\subsection{Radial projections of measures}\label{sec: rad}
Let $n\geq 2$ be an integer. Let $\lambda$ is a compactly supported Borel probability measure on $\mathbb{R}^n.$ For each $x\in \mathbb{R}^n,$ recall the radial projection function
\[
y\in\mathbb{R}^n\setminus\{x\}\to \Pi_x(y)=\frac{y-x}{|x-y|}\in S^{n-1}.
\]
Assume that $x$ is not in the support of $\lambda.$ We now define the measure $\Pi_x(\lambda).$ 

Let $h$ be a continuous and real valued function on $S^{n-1}.$ Let $Ph$ be the function on $\mathbb{R}^n\setminus \{x\}$ such that $Ph(y)=h(\theta)$ if and only if $\Pi_{x}(y)=\theta.$ Then $\Pi_{x}(\lambda)$ can be defined via the equality
\[
\int_{S^{n-1}} h(\theta)d\Pi_{x}(\lambda)(\theta)=\int Ph(y)d\lambda(y)
\]
for all continuous function $h$ on $S^{n-1}.$ The measure $\Pi_{x}(\lambda)$ is a compactly supported Borel probability measure on $S^{n-1}$.

Alternatively, it is possible to define $\Pi_x(\lambda)$ by differentiation. Let $\theta\in S^{n-1}.$ Let $\delta>0.$ Let $B_\delta(\theta)\subset S^{n-1}$ be the metric ball (Euclidean metric) of radius $\delta.$ Define the function
\[
\theta\in S^{n-1}\to r_\delta(\theta)=\frac{\lambda(\Pi_x^{-1}(B_\delta(\theta)))}{L(B_\delta(\theta))}\in [0,\infty),
\]
where $L$ is the Lebesgue probability measure on $S^{n-1}$. Then $r_\delta$ determines a compactly supported Borel probability measure on $S^{n-1}.$ It is possible to check that $\Pi_x(\lambda)$ is equal to the weak limit $\lim_{\delta\to 0} r_\delta$.  

\subsection{Fourier norm dimensions}\label{sec: L1}
Let $n\geq 1$ be an integer. Let $\lambda$ be a compactly supported Borel probability measure on $\mathbb{R}^n.$ Consider the Fourier transform
\[
\hat{\lambda}(\xi)=\int_{\mathbb{R}^n} e^{-2\pi i (x,\xi)}d\lambda(x),
\]
where $(.,.)$ is the standard Euclidean bilinear form. 
\begin{defn}
Let $p>0.$ We define
	\[
	\dim_{l^p}\lambda=\sup\left\{s>0: \sup_{\theta\in [0,1]^n}\sum_{|\xi|\leq R,\xi\in\mathbb{Z}^n} |\hat{\lambda}(\xi+\theta)|^p \ll R^{n-s}\right\}.
	\] 
\end{defn}
With the help of the Cauchy-Schwarz inequality, it is possible to show that
\[
\frac{\dim_{l^1}\lambda}{2}\leq \dim_{l^2}\lambda.
\]
Moreover, we have for each AD-regular measure $\lambda$
\[
\dim_{l^2}\lambda=\Haus \mu=\Haus \supp(\lambda).
\]
Furthermore, let $n\geq 1$ be an integer. Let $\lambda_1,\dots,\lambda_n$ be a Borel probability measure on $\mathbb{R}.$ The $n$-fold Cartesian product $\lambda'=\lambda_1\times\dots\times \lambda_n$ satisfies
\[
\dim_{l^1}\lambda'\geq \dim_{l^1}\lambda_1+\dots+\dim_{l^1}\lambda_n.
\]
We prove the following lemma.
\begin{lma}\label{lma: l1 integral}
Let $n\geq 1$ be an integer. Let $\lambda$ be a Borel probability measure on $\mathbb{R}^{n}$ with $\dim_{l^1}\lambda=s\in (0,n).$ Then for each $\sigma>n-s,$ we have
\[
\int_{\mathbb{R}^n} \frac{|\hat{\lambda}(\xi)|}{|\xi|^{\sigma}+1}d\xi<\infty.
\]
\end{lma}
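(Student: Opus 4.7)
The plan is to tile $\mathbb{R}^n$ by unit cubes indexed by $\mathbb{Z}^n$ in order to turn the integral into an average over $\theta\in[0,1)^n$ of the kind of lattice sum that the definition of $\dim_{l^1}$ controls. Concretely, using Fubini and the decomposition $\mathbb{R}^n=\bigsqcup_{\xi\in\mathbb{Z}^n}(\xi+[0,1)^n)$, I would first rewrite
\[
\int_{\mathbb{R}^n}\frac{|\hat\lambda(\eta)|}{|\eta|^\sigma+1}\,d\eta
=\int_{[0,1)^n}\sum_{\xi\in\mathbb{Z}^n}\frac{|\hat\lambda(\xi+\theta)|}{|\xi+\theta|^\sigma+1}\,d\theta,
\]
so it suffices to bound the inner sum uniformly in $\theta$.

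Next I would choose an auxiliary exponent. Since $\sigma>n-s$, one has $n-\sigma<s$, so I can pick $s'$ with
\[
n-\sigma<s'<s=\dim_{l^1}\lambda.
\]
By the definition of $\dim_{l^1}\lambda$, there is a constant $C=C(s')$ with
\[
\sup_{\theta\in[0,1)^n}\sum_{\xi\in\mathbb{Z}^n,\,|\xi|\leq R}|\hat\lambda(\xi+\theta)|\leq C\,R^{n-s'}
\]
for every $R\geq 1$. This is the only analytic input I need.

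The remaining step is a dyadic decomposition of the $\xi$-sum. For $k\geq 0$ let $A_k=\{\xi\in\mathbb{Z}^n : 2^{k-1}\leq|\xi|<2^k\}$ (with $A_0=\{\xi:|\xi|<1\}=\{0\}$). On $A_k$ for $k\geq 1$, one has $|\xi+\theta|^\sigma+1\gtrsim 2^{k\sigma}$ uniformly in $\theta\in[0,1)^n$, so
\[
\sum_{\xi\in A_k}\frac{|\hat\lambda(\xi+\theta)|}{|\xi+\theta|^\sigma+1}
\lesssim 2^{-k\sigma}\sum_{|\xi|\leq 2^k}|\hat\lambda(\xi+\theta)|
\lesssim 2^{-k\sigma}\cdot 2^{k(n-s')}
=2^{-k(\sigma-(n-s'))},
\]
and the $k=0$ term is bounded by $|\hat\lambda(\theta)|\leq 1$. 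Because $\sigma-(n-s')>0$ by our choice of $s'$, the geometric series in $k$ converges to a constant independent of $\theta$. Integrating over $\theta\in[0,1)^n$ then gives the desired finiteness.

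There is no real obstacle here; the only thing one has to be a little careful about is the choice of $s'$ strictly between $n-\sigma$ and $s$ (which is exactly what the hypothesis $\sigma>n-s$ affords) and the uniformity in $\theta$ of the lower bound on $|\xi+\theta|$ in each dyadic shell, which is immediate because $\theta$ lies in a bounded set.
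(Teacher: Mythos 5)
Your proposal is correct and follows essentially the same route as the paper: both reduce the integral to the lattice-sum bound furnished by the definition of $\dim_{l^1}$ and then sum a convergent geometric series over dyadic shells. The only difference is cosmetic — you make the unit-cube tiling that converts the integral into a $\theta$-averaged lattice sum explicit and work with an auxiliary exponent $s'$ in place of the paper's $\epsilon$, which is a slightly cleaner way to package the same estimate.
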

\begin{proof}
From the definition of $\dim_{l^1}\lambda$ we see that for each $\epsilon>0,$
\[
\int_{2^{k-1}\leq |\xi|\leq 2^k}|\hat{\lambda}(\xi)|d\xi\ll 2^{k(n-s+\epsilon)}
\]
as $k\to\infty.$ Thus we see that
\[
\int_{2^{k-1}\leq |\xi|\leq 2^k} 2^{-k(n-s+2\epsilon)}|\hat{\lambda}(\xi)|d\xi\ll 2^{-k\epsilon}.
\]
This implies that
\[
\int_{\mathbb{R}^n} \frac{|\hat{\lambda}(\xi)|}{|\xi|^{n-s+2\epsilon}+1}d\xi<\infty.
\]
Since $\epsilon>0$ can be arbitrarily chosen, the result concludes.
\end{proof}
The next result was proved in \cite{ACVY} (for $n=1$) and \cite{Y22} (for $n\geq 1$). 
\begin{thm}\label{thm: l1 bound}
	Let $n\geq 1$ be an integer. The following results hold.
	\begin{itemize}
		\item{1:} Let $t\geq 1$ be an integer. We have
		\[
		\liminf_{p\to\infty,\#D\geq p^n-t} \dim_{l^1} \lambda_{p,D}=n.
		\]
		In particular, for each number $\epsilon>0,$ as long as $p$ is large enough, $\dim_{l^1}\lambda_{p,D}>n-\epsilon$ holds for each $D$ with $\#D=p^n-1.$
		\item{2:} For each integer $p\geq 4,$ let $D\subset\{0,\dots,p-1\}^n$ be a 'rectangle', i.e. a set of form $[a_1,b_1]\times [a_2,b_2]\dots [a_n,b_n]\cap \{0,\dots,p-1\}^n.$ Then we have \footnote{The base of $\log$ in this paper is $e$.}
		\begin{align*}\label{T}
		\dim_{l^1}\lambda_{p,D}\geq \Haus \lambda_{p,D}-\frac{n\log\log p^2}{\log p}.
		\end{align*}
	\end{itemize}
\end{thm}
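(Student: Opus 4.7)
The plan is to exploit the self-similar Fourier product
\[
\hat{\lambda}_{p,D}(\xi) = \prod_{k=1}^{\infty} f_D(p^{-k}\xi), \qquad f_D(\eta) = \frac{1}{\#D}\sum_{d\in D} e^{-2\pi i (d,\eta)},
\]
and to estimate the truncated sum $\sup_{\theta\in[0,1]^n}\sum_{|\xi|\le R,\xi\in\ZZ^n}|\hat{\lambda}_{p,D}(\xi+\theta)|$ via a multi-scale decomposition indexed by the base-$p$ digits of each frequency. In both parts, the core mechanism is a bound of the form $|f_D(\eta)|\le 1-c$ away from the zero set of a suitable comparison kernel, together with a count of how often such small factors arise along the scales $k=1,2,3,\ldots$.

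For part $1$, set $D_0=\{0,\ldots,p-1\}^n$ and $E=D_0\setminus D$ with $\#E\leq t$. Since $\lambda_{p,D_0}$ is Lebesgue measure on $[0,1]^n$, the full-digit kernel $f_{D_0}$ vanishes whenever some coordinate of $\eta$ lies in $p^{-1}\ZZ\setminus\ZZ$. The identity
\[
f_D(\eta)=\frac{p^n}{\#D}f_{D_0}(\eta)-\frac{1}{\#D}\sum_{e\in E}e^{-2\pi i(e,\eta)}
\]
gives $|f_D(\eta)|\leq (p^n/\#D)|f_{D_0}(\eta)|+t/\#D$. One then counts, for each integer $\xi$ and shift $\theta$, the scales $k$ at which $|f_{D_0}(p^{-k}(\xi+\theta))|$ is forced to be small by base-$p$ arithmetic; at each such scale $|f_D|$ is at most $O(t/p^n)$, while at the remaining scales one uses $|f_D|\leq 1$ or the finer sub-unit Dirichlet decay. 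A careful combinatorial accounting of integer frequencies up to $R$ organised by their base-$p$ length then produces $\sum|\hat{\lambda}_{p,D}|\ll R^{\epsilon(p,t)}$ with $\epsilon(p,t)\to 0$ as $p\to\infty$, giving $\liminf\dim_{l^1}\lambda_{p,D}=n$.

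For part $2$, I would leverage the tensor structure $\lambda_{p,D}=\bigotimes_{j=1}^n\lambda_{p,D_j}$ for a rectangle $D=D_1\times\cdots\times D_n$, together with the additivity of $\dim_{l^1}$ under products stated in the preliminaries, to reduce the theorem to the one-dimensional bound
\[
\dim_{l^1}\lambda_{p,D_j}\geq \frac{\log m_j}{\log p}-\frac{\log\log p^2}{\log p}
\]
for each interval $D_j=\{a_j,\ldots,a_j+m_j-1\}$ of length $m_j$. In one dimension $f_{D_j}$ is a phase-shifted normalised Dirichlet kernel satisfying the textbook bound $|f_{D_j}(\eta)|\leq\min\bigl(1,(2m_j\|\eta\|)^{-1}\bigr)$, where $\|\eta\|$ denotes the distance from $\eta$ to the nearest integer. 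Plugging this bound into each factor of the infinite product and summing over $\xi\in\ZZ$ with $|\xi|\leq R$, partitioned dyadically by the magnitudes $\|p^{-k}(\xi+\theta)\|$ at the relevant scales, should yield the 1D bound.

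The hard step will be extracting the precise loss $(\log\log p^2)/\log p$ in part $2$: a naive application of the bound $\min(1,(2m_j\|\eta\|)^{-1})$ produces a much cruder loss of order $(\log p)/\log p=1$, so one has to group integer frequencies by exactly how many of their base-$p$ digits push $\|p^{-k}\xi\|$ into each dyadic subinterval of $[0,1/2]$, and sum the resulting geometric series tightly against the $m_j^{-1}$ prefactor. In part $1$, the analogous difficulty is that each fixed integer $\xi$ only admits $O(n)$ scales at which $f_{D_0}$ strictly vanishes, so the polynomial gain has to combine the $O(t/p^n)$ error at those scales with sub-unit decay of $f_D$ at the intermediate scales determined by the digits of $\xi+\theta$; this book-keeping has to be carried out uniformly in $\theta\in[0,1]^n$.
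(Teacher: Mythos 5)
Your plan assembles the right raw ingredients --- the infinite product formula for $\hat{\lambda}_{p,D}$, the Dirichlet--kernel comparison in part 1, and tensorisation plus superadditivity of $\dim_{l^1}$ in part 2 --- but it is missing the one step that makes the estimates close, and both of the places you flag as ``the hard step'' are precisely where that step does the work. The paper does no per-frequency book-keeping of which scales produce small factors. Instead, writing $g(\xi)=(\#D)^{-1}\sum_{d\in D}e^{-2\pi i(d,\xi)}$ and $f(\theta)=\sum_{\mathbf{i}\in\{0,\dots,p-1\}^n}\left|g((\mathbf{i}+\theta)/p)\right|$, it uses the fact that $g((\xi+\theta)/p^j)$ depends only on $\xi$ modulo $p^j$ to peel off the leading base-$p$ digit of $\xi$ and obtain the sub-multiplicative recursion
\[
\sup_{\theta\in[0,1]^n}\sum_{\xi\in\mathbb{Z}^n,\,|\xi|_\infty<p^k}\left|\hat{\lambda}_{p,D}(\xi+\theta)\right|\leq\Bigl(\sup_{\theta'}f(\theta')\Bigr)^{k},
\]
whence $\dim_{l^1}\lambda_{p,D}\geq n-\log\bigl(\sup_\theta f(\theta)\bigr)/\log p$. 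Everything then reduces to the single-scale quantity $\sup_\theta f(\theta)$, which is an $L^1$ Riemann sum of a (product of) Dirichlet kernel(s) over $p$ equally spaced points and is at most $2p\log p$ per coordinate. For a rectangle this gives $\sup_\theta f\leq(\#D)^{-1}(2p\log p)^n$ and hence exactly the loss $n\log(2\log p)/\log p=n\log\log p^2/\log p$; for $\#D=p^n-t$ it gives $(p^n-t)\sup_\theta f\leq tp^n+(2p\log p)^n$ and hence part 1. Your worry in part 2 that a naive application of $|f_{D_j}(\eta)|\leq\min(1,(2m_j\|\eta\|)^{-1})$ loses a full order $1$ is a symptom of working without this recursion: once the problem is reduced to a single scale, the naive Dirichlet bound is already sharp enough. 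If your ``combinatorial accounting organised by base-$p$ length'' were carried out, it would collapse to exactly this recursion, but as written it is left as an unexecuted step.

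Separately, the mechanism you propose for part 1 --- gaining a factor $O(t/p^n)$ at the scales where $f_{D_0}$ is ``forced to be small by base-$p$ arithmetic'' --- cannot work as stated. As you note yourself, a fixed integer frequency admits only $O(n)$ scales at which $f_{D_0}$ strictly vanishes, and for a generic shift $\theta\in[0,1]^n$ (which the $\sup_\theta$ in the definition of $\dim_{l^1}$ forces you to handle) there may be no such scales at all, since $p^{-k}(\xi+\theta)$ then never lands on the zero set of $f_{D_0}$. This yields only a bounded gain per frequency, so summing over the $\asymp R^n$ frequencies up to height $R$ gives a bound of order $R^n$ rather than $R^{\epsilon}$, i.e.\ no lower bound on $\dim_{l^1}$ at all. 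The decay in part 1 does not come from isolated vanishing scales; it comes from the Dirichlet kernel being small on most of the torus, which is only visible after averaging over frequencies at a fixed scale --- that is, through the estimate of $\sup_\theta f(\theta)$ --- and is then amplified multiplicatively across scales by the recursion above.
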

This result only provides crude bounds of $\dim_{l^1}\lambda$ for missing digits measures $\lambda.$ See \cite{ACVY}, \cite{Y21}, \cite{Y22} for more detailed estimates and more applications.
\subsection{Asymptotic symbols}
We use the Vinogradov ($\ll,\gg,\asymp$) notations:

Let $f(\delta), g(\delta)$ be two real valued quantities depending on $\delta>0.$ Then
\begin{itemize}
    \item $f\ll g$ if $|f(\delta)|\leq C|g(\delta)|$ for a constant $C>0$ and all $\delta>0.$
    
    \item $f\asymp g$ if $f\ll g$ and $g\ll f$.
\end{itemize}
\section{Bump functions on rectangles}\label{sec: bump}
Let $\phi: \mathbb{R}\to[0,\infty)$ be a non-zero Schwartz function such that $\hat{\phi}$ is supported on $[-1,1]$ and $0\leq \hat{\phi}(\xi)\leq 1$ for $\xi\in [-1,1]$. Then there is a constant $c=c_\phi>0$ such that for all $x\in\mathbb{R},$
\[
0\leq \phi(x)\leq c.
\]
Moreover, we have $\phi(0)>0.$ Thus, there are numbers $c',c''>0$ such that whenever $x\in [-c',c']$,
\[
\phi(x)>c''.
\]
To find such a non-zero function $\phi,$ we can first choose a function $\phi'$ supported on $[-1/2,1/2]$ and $0\leq \phi'(\xi)\leq 1$ for $\xi\in [-1/2,1/2].$ We can also assume that $\phi'$ is an even function. Then we can choose $\phi=\hat{\phi'*\phi'}.$

Let $\delta>0.$ Let $\phi_\delta(x)=\delta^{-1}\phi(x/\delta).$ Then $\hat{\phi}_\delta$ is supported on $[-1/\delta,1/\delta]$ and $0\leq \hat{\phi}(\xi)\leq 1$ for all $\xi\in\mathbb{R}$. Of course, we also have
\[
0\leq \phi(x)\leq c/\delta
\]
for all $x\in\mathbb{R}$. Next, for $x\in [-c'\delta,c'\delta],$ we have
\[
\phi(x)\geq \delta^{-1}c''.
\]

Let $n>1$ be an integer. Let $r_1,\dots,r_n>0$ be positive numbers. Let $R_{r_1,\dots,r_n}$ be the rectangle
\[
R_{r_1,\dots,r_n}=[-r_1,r_1]\times\dots\times [-r_n,r_n].
\]
Consider the function $\phi_{r_1,\dots,r_n},$
\[
x=(x_1,\dots,x_n)\to \phi_{r_1,\dots,r_n}(x)=\prod_{i=1}^n \phi_{r_i}(x_i).
\]
Then $\hat{\phi}_{r_1,\dots,r_n}$ is supported and with norm at most one on the rectangle
\[
\tilde{R}_{r_1,\dots,r_n}=[-r^{-1}_1,r^{-1}_1]\times\dots\times [-r^{-1}_n,r^{-1}_n].
\]
Moreover, we have that for all $x\in\mathbb{R}^n,$
\[
0\leq \phi_{r_1,\dots,r_n}(x)\leq c^n r^{-1}_1r^{-1}_2\times r^{-1}_n
\]
and for all $x\in c'R_{r_1,\dots,r_n}= [-c'r_1,c'r_1]\times\dots\times [-c'r_n,c'r_n],$ we have
\[
\phi(x)\geq {c''}^{n}/(r_1\dots r_n).
\]

Let $R\subset\mathbb{R}^n$ be a rectangle with side lengths $2r_1,\dots,2r_n.$ Let $\tilde{R}$ be the rectangle centred at the origin with side lengths $2r^{-1}_1,\dots,2r^{-1}_n$ and for each $i\in\{1,\dots,n\},$ the side of $\tilde{R}$ with length $r^{-1}_i$ is parallel with the side of $R$ with length $r_i.$ It is possible to find a rotation $g\in\mathbb{O}(n)$ and a translation $t\in\mathbb{R}^n$ such that $g(R_{r_1,\dots,r_n})+t=R.$ In this case, the function $\phi_R(x)=\phi_{r_1,\dots,r_n}(gx+t)$ has the property that  that $\hat{\phi}_R$ is supported and with norm at most one on the rectangle $\tilde{R}.$ Moreover, we have that for all $x\in\mathbb{R},$
\[
0\leq \phi_R(x)\leq c^nr^{-1}_1\dots r^{-1}_n
\]
and for $x\in c'R,$ we have
\[
\phi_R(x)\geq {c''}^{n}/(r_1\dots r_n).
\]
We note that $\hat{\phi}_R$ may not be real valued. 

\section{Proof of Theorem \ref{thm: n1}: the $L^p, p\geq 2$ case}
We prove the second conclusion of Theorem \ref{thm: n1}. The proof of the first conclusion (case $p=1$) is of a different nature and will be provided in the next section. 

The proof in this section is divided into 4 steps. In the first two steps. We provide the proof for the case $n=p=2.$ In this way, it is easier to picture the underlying ideas. In Step 3, we will provide the proof for larger $n.$ Finally, in Step 4, we extend the proof to deal with the cases when $p\geq 2.$
\subsection*{Step 1}
Let $\lambda$ be an $AD$-regular probability measure with $\dim_{l^1}\lambda>3/2.$ Let $K$ be the support of $\lambda$. Consider the radial projection $\Pi_x(\lambda)$ of the measure $\lambda$ at $x\in\mathbb{R}^2.$ Without loss of generality, we can assume that $x=(0,0)$ and that $K$ is contained in the ball centred at $(0,0)$ with radius one. Moreover, $(0,0)$ is not in the support of $\lambda.$ We now write $\Pi(\lambda)$ instead of $\Pi_x(\lambda).$

Let $\delta>0.$ For each $\theta\in S^1,$ let $T^\delta_\theta$ be the rectangle centred at the origin of side lengths $2,2\delta$ whose long side is pointing towards the direction $\theta$, i.e. the long side has a direction vector parallel with $\theta.$ Consider the function $f_\delta$
\[
\theta\in S^1\to f_\delta(\theta)=\frac{\lambda(T^\delta_\theta)}{\delta}\in [0,\infty).
\]
Since $\lambda$ is $AD$-regular with dimension larger than one, we see that $\lambda$ does not give positive measures to lines. This implies that $f_\delta$ is continuous. Our goal is to show that
\begin{align}\label{eqn: toshow}
\int_{S^1} f^2_\delta(\theta)d\theta\ll 1.
\end{align}
This implies that there is a sequence $\delta_i\to 0$ and a function $f\in L^2(S^1)$ such that $f_{\delta_1}\to f$ weakly (in the separable Hilbert space $L^2(S^1)$). 

The function $f_\delta$ is not the same as $r_\delta$ defined in Section \ref{sec: rad}. Since $\lambda$ is compactly supported and $(0,0)$ is not in the support of $\lambda,$ we see that $\lambda(T^\delta_\theta)$ is almost the same as $\lambda(\Pi^{-1}(B_\delta(\theta)))$. More precisely, there is a constant $0<C<1$ depending on $\lambda$ such that for all $\theta\in S^1,$
\[
\Pi^{-1}(B_{C\delta}(\theta))\subset T^\delta_\theta,
\]
and on the other direction, we have (as the support of $\lambda$ omit a ball centered at $(0,0)$)
\[
\lambda(T^\delta_\theta)\leq \lambda(\Pi^{-1}(B_{C^{-1}\delta}(\theta))).
\]
This implies that
\[
f_\delta(\theta)\ll r_\delta(\theta)\ll f_\delta(\theta)
\]
for all $\theta$ and the implicit constants in $\ll$ depend only on $\lambda.$ If (\ref{eqn: toshow}) holds, then 
\[
\int_{S^1} r^2_\delta(\theta) d\theta\ll 1
\]
as well. As above, we see that the limit $\lim_{\delta}r_\delta$ exists both in distribution and $L^2.$ This implies that $\Pi(\lambda)$ is absolutely continuous with respect to the Lebesgue measure and has an $L^2$ density function.
\subsection*{Step 2}
Recall the bump function $\phi_{{c'}^{-1}T^\delta_\theta}.$ We see that
\[
\lambda(T^\delta_\theta)\ll \delta\int \phi_{{c'}^{-1}T^\delta_\theta}(x) d\lambda(x).
\]
This is because $\phi_{{c'}^{-1}T^\delta_\theta}$ takes value in $[0,(c/c')^2 /\delta]$ and for $x\in c' ({c'}^{-1}T^\delta_\theta)=T^\delta_\theta,$ we have
\[
\phi_{{c'}^{-1}T^\delta_\theta}(x)\geq {(c''/c')}^2 /\delta.
\]
Recall that $c,c',c''>0$ are constants depending on the choice of the bump function $\phi$ in Section \ref{sec: bump}. By using Parseval's theorem, we see that
\[
\int \phi_{{c'}^{-1}T^\delta_\theta}(x) d\lambda(x)=\int \hat{\phi}_{{c'}^{-1}T^\delta_\theta}(\xi)\hat{\lambda}(-\xi)d\xi.
\]
Notice that $\hat{\phi}_{{c'}^{-1}T^\delta_\theta}$ is supported and with norm at most one on the rectangle $\tilde{{c'}^{-1}T^\delta_\theta}.$ For convenience, we write $R^\delta_\theta=\tilde{{c'}^{-1}T^\delta_\theta}.$ The rectangle $R^\delta_\theta$ has side lengths $2c'$ and $2c'/\delta.$ The side with length $2c'$ is pointing at the direction $\theta.$ From here we see that
\[
\int \hat{\phi}_{{c'}^{-1}T^\delta_\theta}(\xi)\hat{\lambda}(-\xi)d\xi\leq \int_{R^\delta_\theta} |\hat{\lambda}(\xi)|d\xi.
\]
Then we see that
\[
f_\delta(\theta)\ll\int_{R^\delta_\theta} |\hat{\lambda}(\xi)|d\xi,
\]
where the implicit constant in $\ll$ depends on $\phi$ only. Thus we have
\[
\int_{S^1} f^2_\delta(\theta)d\theta\ll \int_{S^1}\int\int |\hat{\lambda}(\xi)||\hat{\lambda}(\xi')|d\xi d\xi' d\theta.
\]
We now consider the triple integral above. By using Fubini's theorem, we see that
\[
\int_{S^1}\int_{R^\delta_\theta}\int_{R^\delta_\theta} |\hat{\lambda}(\xi)||\hat{\lambda}(\xi')|d\xi d\xi' d\theta=\int_{R^\delta_\theta}\int_{R^\delta_\theta} h(\xi,\xi') |\hat{\lambda}(\xi)||\hat{\lambda}(\xi')|d\xi d\xi',
\]
where the function $h(\xi,\xi')$ is defined via
\[
h(\xi,\xi')=|\{\theta\in S^1: \xi,\xi'\in R^\delta_\theta\}|.
\]
Here $|A|$ for a measurable set $A\subset S^1$ is the Lebesgue measure of $A.$ It is possible to see that for each $\xi\in\mathbb{R}^2$
\[
|\{\theta\in S^1: \xi\in R^\delta_\theta\}|\ll \frac{1}{|\xi|},
\]
where the implicit constant in $\ll$ depends on $\phi.$ Thus we see that
\[
h(\xi,\xi')\ll \frac{1}{\max\{|\xi|,|\xi'|\}}\leq \frac{1}{|\xi|^{1/2}|\xi'|^{1/2}}.
\]
This implies that
\[
\int\int h(\xi,\xi') |\hat{\lambda}(\xi)||\hat{\lambda}(\xi')|d\xi d\xi'\ll \left(\int |\hat{\lambda}(\xi)||\xi|^{-1/2}d\xi\right)^2.
\]
Since $\dim_{l^1}\lambda>3/2,$ Lemma \ref{lma: l1 integral} implies that
\[
\int |\hat{\lambda}(\xi)||\xi|^{-1/2}d\xi<\infty.
\]
From here we see that
\[
\int_{S^1} f^2_\delta(\theta)d\theta\ll 1.
\]
This is what we wanted to show in (\ref{eqn: toshow}).
\subsection*{Step 3}
The general case follows with similar arguments although it may not be easy to picture the ideas. Now let $n\geq 3$ be an integer and $\lambda$ be a compactly supported Borel probability measure on the unit ball centred at the origin with $\dim_{l^1}\lambda>n-(1/2).$ For the radial projection $\Pi_x$, we assume that $x=(0,\dots,0).$ Moreover, we assume that $(0,\dots,0)$ is not in the support of $\lambda$. We again write $\Pi$ instead of $\Pi_x.$

Let $1/2>\delta>0$ and $\theta\in S^{n-1}.$ Now, let $T^\delta_\theta$ be the rectangle of side lengths $2,2\delta,\dots,2\delta$ centred at the origin with the side of length $2$ being parallel with $\theta.$ Consider the function
\[
\theta\in S^{n-1}\to f_\delta(\theta)=\frac{\lambda(T^\delta_\theta)}{\delta^{n-1}}\in [0,\infty).
\]
We want to show that
\[
\int_{S^{n-1}}f^2_\delta(\theta)d\theta\ll 1.
\]
This will conclude the result just as in the case when $n=2$.

Consider the bump function $\phi_{{c'}^{-1}T^\delta_\theta}.$ The Fourier transform of $\phi_{{c'}^{-1}T^\delta_\theta}$ is essentially supported on a rectangle $R^\delta_\theta$. The rectangle $R^\delta_\theta$ has side lengths $$2c',  2c'/\delta, \dots, 2c'/\delta$$ and it is centred at the origin. Moreover, the side with length $2c'$ is parallel with $\theta.$ Intuitively speaking, the rectangle $R^\delta_\theta$ is a 'thin plate' normal to $\theta.$ 

Next, with the same arguments as in the previous steps, it is possible to see that
\[
\int_{S^{n-1}}f^2_\delta(\theta)d\theta\ll \int_{R^\delta_\theta}\int_{R^\delta_\theta}h(\xi,\xi')|\hat{\lambda}(\xi)||\hat{\lambda}(\xi')|d\xi d\xi'
\]
where
\[
h(\xi,\xi')=|\{\theta\in S^{n-1}: \xi,\xi'\in R^\delta_\theta\}|.
\]
Similar to the case when $n=2,$ it is possible to check that
\[
|\{\theta\in S^{n-1}: \xi\in R^\delta_\theta\}|\ll\frac{1}{|\xi|}.
\]
Loosely speaking, this is because that 
\[
\{\theta\in S^{n-1}: \xi\in R^\delta_\theta\}\subset S^{n-1}
\]
is a stripe of width roughly $1/|\xi|.$ Thus we see that\footnote{We remark that the inequality here is rather crude. In fact, for ''generic' $\xi,\xi'$ we have
\[
h(\xi,\xi')\ll \frac{1}{|\xi||\xi'|}.
\]
Our upper bound corresponds to the worst-case scenario when the two stripes (one for $\xi$ and one for $\xi'$) have a large intersection, e.g. they can be the same stripe.
}
\[
h(\xi,\xi')\ll \frac{1}{\max\{|\xi|,|\xi'|\}}.
\]
This implies that
\[
\int\int h(\xi,\xi') |\hat{\lambda}(\xi)||\hat{\lambda}(\xi')|d\xi d\xi'\ll \left(\int |\hat{\lambda}(\xi)||\xi|^{-1/2}d\xi\right)^2.
\]
Since $\dim_{l^1}\lambda>n-(1/2),$ the result concludes as in the previous step.
\subsection*{Step 4} We now consider the case when $p>2.$ As before, the goal is to show that as $\delta\to 0,$
\[
\int_{S^{n-1}}f^p_{\delta}(\theta)d\theta\ll 1.
\]
The arguments in Step 3 (or Step 2) can be borrowed without problems. Notice that
\[
\int_{S^{n-1}}f^{p}_\delta(\theta)d\theta\ll \int_{R^\delta_\theta}\int_{R^\delta_\theta}\dots \int_{R^\delta_\theta} h(\xi_1,\xi_2,\dots,\xi_p)|\hat{\lambda}(\xi_1)||\hat{\lambda}(\xi_2)|\dots |\hat{\lambda}(\xi_p)|d\xi_1\dots d\xi_p,
\]
where
\[
h(\xi_1,\dots,\xi_p)=|\{\theta\in S^{n-1}: \xi_1,\dots,\xi_p\in R^{\delta}_\theta\}|\ll \frac{1}{\max\{|\xi_1|,\dots,|\xi_p|\}}.
\]
Since $\dim_{l^1}\lambda>n-p^{-1},$ we see that
\[
\int_{S^{n-1}}f^{p}_\delta(\theta)d\theta\ll \left(\int
\frac{1}{|\xi|^{1/p}}|\hat{\lambda}(\xi)|d\xi\right)^p\ll 1.
\]
This is what we wanted to show.

\section{Proof of Theorem \ref{thm: n1}: the $L^1$ case}\label{sec: planar}
We prove the first conclusion of Theorem \ref{thm: n1}. We no longer separate the proof of the special case $n=2$. However, with $n=2,$ the geometrical idea is more transparent.

Let $\lambda$ be a Borel probability measure on $\mathbb{R}^n$ with $s_1=\dim_{l^1}\lambda>n-1.$ We assume that $(0,\dots,0)\notin\supp(\lambda)$ and consider $\Pi(\lambda)=\Pi_x(\lambda)$ with $x=(0,\dots,0).$ Assume that $\Haus \Pi(\lambda)=n-1.$ The result for other points $x\in\mathbb{R}^n$ can be considered in a similar way.

Let $R>1.$ Consider the annulus 
\[
A_R=\{R\leq |\xi|\leq 2R\}.
\]
We see that for each $\epsilon>0,$ as $R\to\infty,$
\[
\int_{A_R} |\hat{\lambda}(\xi)|d\xi\ll R^{n-s_1+\epsilon}.
\]
For each $\theta\in S^{n-1},$ let $S_{1/R}(\theta)\subset S^{n-1}$ be the stripe of width $1/R$ 'normal' to $\theta.$ More precisely, it is the set
\[
S_{1/R}(\theta)=\{\theta'\in S^{n-1}: |(\theta,\theta')|\leq 1/R\}.
\]
Consider the set
\[
Q_{R}(\epsilon)=\left\{\theta\in S^{n-1}: \int_{\xi\in A_R, \xi/|\xi|\in S_{1/R}(\theta)}|\hat{\lambda}{(\xi)}|d\xi\geq R^{n-1-s_1+2\epsilon} \right\}.
\]

We claim that $Q_R(\epsilon)$ can be covered with $\ll R^{n-1-\epsilon}$ many balls of radius $4/R.$ To see this, cover $S^{n-1}$ with $\asymp R^{n-1}$ many balls of radius $2/R$ whose centres at least $1/R$-separated from each other. Furthermore, it is possible to arrange that the maximum multiplicity of those balls is less than $C_n>1,$ a constant which depends on $n$ only. Let $\theta_i,i\geq 1$ be a numeration of the centres of those balls. Consider the sum
\[
\sum_{i\geq 1} \int_{S_{1/R}(\theta_i)}|\hat{\lambda}(\xi)|d\xi=\int |\{i\geq 1: \xi\in S_{1/R}(\theta_i)\}||\hat{\lambda}(\xi)|d\xi.
\]
Observe that
\[
|\{i\geq 1: \xi\in S_{1/R}(\theta_i)\}|\ll R^{n-2}.
\]
To see this, we temporally assume (without loss of generality) that $\xi=(1,0,\dots,0).$ In this case, we need $\theta_i$ to have
\[
|(\xi,\theta_i)|\leq R^{-1}.
\]
Namely, we write this condition in coordinates ($\theta_i=(\theta_{i,1},\dots,\theta_{i,n})$) as
\[
|\theta_{i,1}|\leq R^{-1}
\]
and
\[
\sum_{j=1}^n \theta^2_{i,j}=1.
\]
We require that $\theta_i's$ are $1/R$-separated. From here, we can confirm the observation. Therefore we see that
\[
\sum_{i\geq 1} \int_{S_{1/R}(\theta_i)}|\hat{\lambda}(\xi)|d\xi\ll R^{n-2}R^{n-s_1+\epsilon}.
\]
Thus there are at most
\[
\ll R^{n-1-\epsilon}
\]
many $i$ with
\[
\int_{S_{1/R}(\theta_i)}|\hat{\lambda}(\xi)|d\xi\geq R^{n-1-s_1+2\epsilon}.
\]
This proves the claim.

Let $k\geq 1$ be an integer. From the claim, we see that $Q_{2^k}(\epsilon)$ can be covered with $\ll 2^{k(n-1-\epsilon)}$ many balls of radius $\asymp 1/2^{k}.$ Let $1>\rho_\epsilon>(n-1-\epsilon)/(n-1)$ be a positive number. We see that
\[
\sum_{k\geq 1} 2^{k(n-1-\epsilon)} (1/2^{k})^{(n-1)\rho_\epsilon}<\infty.
\]
From the convergence Hausdorff-Borel-Cantelli lemma (\cite{HS18}), we see that
\[
\limsup_{k\to\infty} Q_{2^k}(\epsilon)\subset S^{n-1}
\]
has zero $(n-1)\rho_\epsilon$-Hausdorff measure. This implies that
\[
\Haus \limsup_{k\to\infty} Q_{2^k}(\epsilon)\leq (n-1)\rho_\epsilon<n-1.
\]

Since $\Haus \Pi(\lambda)=n-1,$ we see that $\Pi(\lambda)(\limsup_{k\to\infty} Q_{2^k}(\epsilon))=0.$ In other words, for $\Pi(\lambda)$ almost all $\theta\in S^{n-1},$ there are at most finitely many $k\geq 1$ such that $\theta\in Q_{2^k}(\epsilon).$ Since $s_1>n-1,$ it is possible to choose $\epsilon>0$ such that $n-1-s_1+2\epsilon<0.$ Fix such a choice of $\epsilon$. Let $\theta\in S^{n-1}$ be such that $\theta\in Q_{2^k}(\epsilon)$ for at most finitely many $k.$ Let $P^{1/100}_\theta$ be the slab normal to $\theta,$ centred at the origin, with width $1/100,$
\[
P^{1/100}_\theta=\{x\in\mathbb{R}^n: |(\theta,x)|\leq 1/200\}.
\]
Then we see that
\begin{align}\label{eqn: L^1 Fourier}
\int_{P^{1/100}_\theta} |\hat{\lambda}(\xi)|d\xi=\int_{P^{1/100}_\theta\cap B_1(0)} |\hat{\lambda}(\xi)|d\xi+\sum_{k\geq 0}\int_{P^{1/100}_\theta\cap A_{2^k}} |\hat{\lambda}(\xi)|d\xi<\infty.
\end{align}
This is because for all large enough $k,$
\[
dir(P^{1/100}_\theta\cap A_{2^k})\subset S_{1/{2^k}}(\theta),
\]
where $dir(x)=x/|x|$ for $x\in\mathbb{R}^n\setminus\{0\}$ and $dir(A)=\{dir(x): x\in A\}\subset \mathbb{S}^{n-1}.$
Thus there is a constant $c'''>0$ (depending on the choice of $\phi$ in Section \ref{sec: bump}) such that
\[
\lambda(T^{c'''\delta}_\theta)/\delta^{n-1}\ll \int_{P^{1/100}_\theta}|\hat{\lambda}(\xi)|d\xi<\infty. 
\]
In other words (rescale $\delta$ if necessary), we have
\[
\limsup_{\delta\to 0}\lambda(T^{\delta}_\theta)/\delta^{n-1}<\infty.
\]
Recall the function $f_\delta(\theta)=\lambda(T^\delta_\theta)/(\delta^{n-1}).$ Consider the limit $\lim_{\delta\to 0}f_\delta$ as a measure. This limit measure is equivalent to the measure $\Pi(\lambda).$ We have seen that for $\Pi(\lambda)$ almost all $\theta\in S^{n-1},$ 
\[
\liminf_{\delta\to 0}f_\delta(\theta)\leq \limsup_{\delta\to 0} f_\delta(\theta)<\infty.
\]
Thus the limit measure $\lim_{\delta\to 0}f_\delta$ and $\Pi(\lambda)$ are absolutely continuous with respect to the Lebesgue measure on $S^{n-1}.$ See \cite[Theorem 2.11]{Ma2}. 

\section{Proof of Theorem \ref{thm: n2}}
Let $\theta\in S^{n-1}.$ By performing a rotation, let us assume that $\theta=(0,\dots,0,1).$ We consider the projection $P_{\theta}(\lambda).$ For doing this, observe that for each $\xi\in\mathbb{R}^{n-1},$
\[
\hat{P_\theta(\lambda)}(\xi)=\int_{\mathbb{R}^{n-1}} e^{-2 \pi i (\xi,x)}dP_\theta(\lambda)(x)=\int_{\mathbb{R}^{n}} e^{-2 \pi i (\xi',x')}d\lambda(x')=\hat{\lambda}(\xi'),
\]
where $\xi'$ is the unique element in $\{\xi\}\times\{0\}\subset\mathbb{R}^n.$ Let $\xi'_t$ be the unique element in $\{\xi\}\times\{t\}$ for $t\in\mathbb{R}.$ Consider the function
\[
J_{\xi}:t\to \hat{\lambda}(\xi'_t).
\]
Then we see that (write $x'=(x'_1,\dots,x'_{n})$)
\[
J'_\xi(t)=\int_{\mathbb{R}^{n}} e^{-2\pi i (\xi'_t,x')} (-2\pi i x'_{n})d\lambda(x').
\]
The equality holds because $\lambda$ is compactly supported. Thus we see that
\[
|J'_\xi(t)|\leq 2\pi \int_{\mathbb{R}^{n}} |x'_n| d\lambda(x')<\infty.
\]
Notice that the RHS above is independent with respect to $\xi\in\mathbb{R}^{n-1}.$ This implies that ($d_{n-1}$ is the Lebesgue measure on $\mathbb{R}^{n-1}\times \{0\}$, $d_n$ is the Lebesgue measure on $\mathbb{R}^n$)
\[
\int_{\mathbb{R}^{n-1}\times\{0\}}|\hat{\lambda}(\xi')|d_{n-1} (\xi')\ll \int_{\mathbb{R}^{n-1}\times [-1/100,1/100]} |\hat{\lambda}(\xi')|d_n(\xi').
\]
Thus as long as 
\[
\int_{\mathbb{R}^{n-1}\times [-1/100,1/100]} |\hat{\lambda}(\xi')|d_n(\xi')<\infty,
\]
we have, for $\theta=(0,\dots,1),$
\[
\int_{\mathbb{R}^{n-1}}|\hat{P_\theta(\lambda)}(\xi)|d\xi<\infty.
\]
It follows that $P_\theta(\lambda)$ has a absolutely integrable Fourier transform. This implies that the measure $P_\theta(\lambda)$ has a continuous density function. From (\ref{eqn: L^1 Fourier}), we see that there is a set $E\subset S^{n-1}$ with Hausdorff dimension smaller than $n-1$ such that as long as $\theta\notin E,$ after performing a suitable rotation so that $\theta$ becomes $(0,\dots,1),$ the rotated measure (still written as $\lambda$) satisfies,
\[
\int_{\mathbb{R}^{n-1}\times [-1/100,1/100]} |\hat{\lambda}(\xi')|d_n(\xi')<\infty.
\]
From here we conclude that $P_\theta(\lambda)$ has a continuous density function for all such $\theta\notin E.$ Since $E$ does not have full Hausdorff dimension in $S^{n-1},$ it also has zero Lebesgue measure. From here, the proof finishes. 
\section{Integers with restricted digits: Graham's problem}\label{sec: Integers}

The difficulty level 4 part of Conjecture \ref{conj} has a consequence in number theory. Let $k\geq 1$ be an integer. Let $2<b_1<b_2<\dots<b_k$ be integers. Let $D_1\subset\{0,\dots,b_1\},\dots,D_k\subset\{0,\dots,b_k\}$ be digit sets. Let $N^{D_1,\dots,D_k}_{b_1,\dots,b_k}$ be the set of positive integers whose base $b_1$ expansion contains digits only in $D_1,$ \dots, base $b_k$ expansion contains digits only in $D_k.$

We have the following result. See \cite{Y20}.
\begin{thm}
Assume the difficulty level 4 part of Conjecture  \ref{conj}. Suppose that 
\[
1,\frac{\log b_1}{\log b_2},\dots,\frac{\log b_1}{\log b_k}
\]
are $\mathbb{Q}$-linearly independent. If
\[
\sum_{i=1}^{k} \frac{\log |D_i|}{\log b_i}>k-1,
\]
then $N^{D_1,\dots,D_k}_{b_1,\dots,b_k}$ is infinite.
\end{thm}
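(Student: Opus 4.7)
Let $\mu = \lambda_{b_1,D_1}\times\cdots\times\lambda_{b_k,D_k}$, a Cartesian product of missing digits measures on $\mathbb{R}^k$ supported on $K := K_{b_1,D_1}\times\cdots\times K_{b_k,D_k}$, with $\Haus K = \sum_{i=1}^{k}\log|D_i|/\log b_i > k-1$ by the dimension hypothesis and AD-regularity. Applying the difficulty level 4 part of Conjecture \ref{conj} to $\mu$, for every $x\notin K$ the radial projection $\Pi_x(\mu)$ has an upper semi-continuous density $\rho_x$ on $S^{k-1}$, so $U_x:=\{\rho_x>0\}\subset S^{k-1}$ is a non-empty open set and in particular $\Pi_x(K)$ has non-empty interior.

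Next I would reformulate the arithmetic condition geometrically. For $N\in\mathbb{Z}_{>0}$ let $m_i(N)=\lfloor\log_{b_i}N\rfloor+1$ and set
\[
\phi(N) = \bigl(Nb_1^{-m_1(N)},\dots,Nb_k^{-m_k(N)}\bigr).
\]
A positive integer $N$ lies in $N^{D_1,\dots,D_k}_{b_1,\dots,b_k}$ precisely when, for every $i$, the base-$b_i$ expansion of $N$ uses digits in $D_i$; equivalently, $Nb_i^{-m_i(N)}$ is a finite base-$b_i$ expansion belonging to $K_{b_i,D_i}$, hence $N\in N^{D_1,\dots,D_k}_{b_1,\dots,b_k}$ iff $\phi(N)\in K$. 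Crucially, $\phi(N)$ lies on the ray from the origin with direction $\omega_{\mathbf{m}}:=(b_1^{-m_1},\dots,b_k^{-m_k})/|(b_1^{-m_1},\dots,b_k^{-m_k})|$, and this direction depends only on $\mathbf{m}=(m_1(N),\dots,m_k(N))$, not on $N$ itself.

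The $\mathbb{Q}$-linear independence of $1,\log b_1/\log b_2,\dots,\log b_1/\log b_k$ implies via Kronecker's theorem that $\{\omega_{\mathbf{m}}:\mathbf{m}\in\mathbb{Z}_{>0}^k\}$ is dense in the positive orthant of $S^{k-1}$; the same equidistribution can be used to simultaneously arrange the window $\bigcap_i[b_i^{m_i-1},b_i^{m_i})$ of admissible integers to be non-empty and to grow without bound (by tuning so that $b_1^{m_1}\asymp\cdots\asymp b_k^{m_k}$). Taking $x=0$ (assuming without loss that $0\notin K$, otherwise perturb the base-point), we conclude that infinitely many tuples $\mathbf{m}$ satisfy $\omega_{\mathbf{m}}\in U_0$ and have a large admissible window.

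The main obstacle, and the heart of the argument, is to promote a favourable direction $\omega_{\mathbf{m}}\in U_0$ into the existence of an actual integer $N$ in the window with $\phi(N)\in K$. Disintegrating $\mu$ along radial fibres and using the lower bound $\rho_0\geq c>0$ on a small ball $V\subset U_0$ yields $\mu(\mathrm{cone}(V))\gg|V|$, so the intersection of $K$ with the cone carries positive $\mu$-mass distributed along rays of positive $1$-dimensional fibre density. Combining this with the AD-regularity of $\mu$ and an $N$-counting argument in the spirit of \cite{Y20}, one should show that the number of integers $N$ in the admissible window satisfying $\phi(N)\in K$ grows at least like $(\min_i b_i^{m_i})^{\Haus K-(k-1)}$, which tends to infinity since $\Haus K > k-1$. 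This yields the infinitude of $N^{D_1,\dots,D_k}_{b_1,\dots,b_k}$.
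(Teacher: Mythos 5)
The paper does not prove this theorem in the text (it is quoted from \cite{Y20}), so I compare your proposal against the argument that the reduction to the difficulty level 4 statement requires. Your skeleton is the right one: form the product measure $\mu$, invoke the level 4 conjecture to get that $\Pi_0(K)$ has non-empty interior, observe that $\phi(N)=N\,(b_1^{-m_1},\dots,b_k^{-m_k})$ lies on a ray through the origin whose direction depends only on $\mathbf{m}$, and use the $\mathbb{Q}$-linear independence via Kronecker to make these directions dense. But the step you yourself flag as ``the heart of the argument'' is exactly where the proof is missing, and the route you sketch for it does not work. You pose the problem as: find an integer $N$ in the window with $\phi(N)\in K$, i.e.\ the ray must meet $K$ at an \emph{integer} parameter value, and you propose to extract this from positive $\mu$-mass of a cone plus AD-regularity via a counting bound of order $(\min_i b_i^{m_i})^{\Haus K-(k-1)}$. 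That bound is essentially a strong quantitative form of the theorem itself; knowing $\mu(\mathrm{cone}(V))\gg |V|$ controls how many rays meet $K$, not where along a \emph{given} ray the intersection sits relative to the integer lattice of parameters, so nothing in your hypotheses delivers it.

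The missing observation is that no counting is needed: one intersection point per ray suffices. Suppose the ray in direction $\omega_{\mathbf{m}}$ meets $K$ at $y=t_0\,(b_1^{-m_1},\dots,b_k^{-m_k})$ with $t_0\notin\mathbb{Z}$. Since $y_i b_i^{m_i}=t_0$ for \emph{every} $i$, the level-$\mathbf{m}$ cell $\prod_i[c_ib_i^{-m_i},(c_i+1)b_i^{-m_i}]$ containing $y$ has the same index $c_i=\lfloor t_0\rfloor=:N$ in every coordinate; and because $K_{b_i,D_i}$ is covered by the closed cells whose index, written as an $m_i$-digit base-$b_i$ string, uses only digits from $D_i$, the point $y\in K$ forces all $m_i$ digits of $N$ in base $b_i$ to lie in $D_i$, simultaneously for all $i$. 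Thus $N\in N^{D_1,\dots,D_k}_{b_1,\dots,b_k}$, and $N\geq t_0-1\gg \min_i b_i^{m_i}\to\infty$ gives infinitely many such integers. (The degenerate case $t_0\in\mathbb{Z}$ for every intersection point is easily avoided.) Three smaller points: (i) your equivalence ``$N\in N^{D_1,\dots,D_k}_{b_1,\dots,b_k}$ iff $\phi(N)\in K$'' fails when $0\notin D_i$, since a terminating expansion need not lie in the closure $K_{b_i,D_i}$ --- the correct formulation is the cell one above, which is also what makes the argument close; (ii) the base point must be the origin for $\phi(N)$ to lie on a fixed ray, so ``perturb the base-point'' is not available when $0\in K$ (i.e.\ when $0\in D_i$ for all $i$); one must instead restrict to a first-level sub-cell of $K$ avoiding the origin; (iii) for an upper semi-continuous density $\rho$, the set $\{\rho>0\}$ is $F_\sigma$ but need not be open (take $\rho$ proportional to the indicator of a fat Cantor set), so your justification of the non-empty interior is not literally correct as written, although the paper asserts this implication and you may take it as given.
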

The above theorem would answer a question of Graham which asks whether or not $N^{\{0,1\},\{0,1,2\},\{0,1,2,3\}}_{3,5,7}$ is infinite. In addition to the challenging difficulty level 4 problem, the $\mathbb{Q}$-linear independence of 
\[
1,\frac{\log b_1}{\log b_2},\dots,\frac{\log b_1}{\log b_k}
\]
is also not easy to be checked if $k\geq 3$. Those are the two heavy boulders in front of Graham's problem.\footnote{Of course, there might be a completely different approach to Graham's question that could bypass the two boulders.}

From Theorem \ref{thm: l1 bound} and Theorem \ref{thm: n1}, it is possible to draw some weaker conclusions. Let $t_1,\dots,t_k$ be numbers in $(0,1].$ Let $N^{D_1,\dots,D_k}_{b_1,\dots,b_k}(t_1,\dots,t_k)$ be the set of positive integers such that the base $b_1$ expansion of $[t_1n]$ contains digits only in $D_1,$ \dots, the base $b_k$ expansion of $[t_k n]$ contains digits only in $D_k.$

\begin{thm}
Let $4<b_1<b_2$ be integers with $\log b_1/\log b_2\notin\mathbb{Q}.$ Let $D_1, D_2$ be consecutive digit sets. If
\[
\sum_{i=1}^{2} \left(\frac{\log |D_i|}{\log b_i}-\frac{\log(\log b_i^2)}{\log b_i}\right)>1,
\]
then $N^{D_1,D_2}_{b_1,b_2}(t_1,t_2)$ is infinite for Lebesgue almost all $(t_1,t_2)\in (0,1]^2$.
\end{thm}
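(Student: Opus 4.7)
My plan is to obtain the absolute continuity of the radial projection $\Pi_x(\lambda_1\times\lambda_2)$ from Theorem \ref{thm: n1} and convert it into the integer-counting conclusion via a divergent Borel--Cantelli argument on the parameter space $(0,1]^2$.

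First I verify the hypotheses of Theorem \ref{thm: n1}. Write $\lambda_i:=\lambda_{b_i,D_i}$. Since each $D_i$ is a consecutive digit set, hence a rectangle in dimension $n=1$, Theorem \ref{thm: l1 bound}(2) gives
\[
\dim_{l^1}\lambda_i \;\ge\; \frac{\log|D_i|}{\log b_i}-\frac{\log\log b_i^2}{\log b_i}.
\]
Summing and using the Cartesian-product inequality $\dim_{l^1}(\lambda_1\times\lambda_2)\ge\dim_{l^1}\lambda_1+\dim_{l^1}\lambda_2$ from the preliminaries, the theorem's hypothesis forces $\dim_{l^1}(\lambda_1\times\lambda_2)>1$. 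Combined with $\log b_1/\log b_2\notin\mathbb{Q}$ and $\Haus(\lambda_1\times\lambda_2)=\alpha_1+\alpha_2>1$ (where $\alpha_i:=\log|D_i|/\log b_i$), the remark following Theorem \ref{thm: n1} (invoking \cite[Thm.~7.2]{Sh}) ensures $\Haus\Pi_x(\lambda_1\times\lambda_2)=1$ for every $x\notin\supp(\lambda_1\times\lambda_2)$. The first conclusion of Theorem \ref{thm: n1} then furnishes the absolute continuity of $\Pi_x(\lambda_1\times\lambda_2)$ for every such $x$.

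Next I reformulate the conclusion as a limsup event. For each positive integer $n$ set
\[
E_n \;:=\; \{(t_1,t_2)\in(0,1]^2:\ [t_i n]\in N_i\text{ for }i=1,2\},
\]
where $N_i$ is the set of positive integers whose base-$b_i$ digits lie in $D_i$. Fubini gives $|E_n|=(|N_1\cap[0,n]|/n)(|N_2\cap[0,n]|/n)\asymp n^{\alpha_1+\alpha_2-2}$, and $\alpha_1+\alpha_2>1$ yields $\sum_n|E_n|=\infty$. The theorem's assertion is precisely $|\limsup_n E_n|=1$. To deduce this, I would establish a pairwise correlation bound $|E_j\cap E_k|\le C|E_j||E_k|$ outside a sparse family of resonant pairs: the identification of integer points of $N_i\cap[0,b_i^{k_i})$ with lattice rationals in the Cantor set $K_{b_i,D_i}$ allows $|E_j\cap E_k|$ to be rewritten as an integral against a discretization of $\lambda_1\times\lambda_2$, and the absolute continuity of $\Pi_x(\lambda_1\times\lambda_2)$, i.e.\ the $L^1$-integrability of the projected density on $S^1$, then yields the bound for pairs $(j,k)$ of incompatible scales. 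Pairs multiplicatively close via a common base are controlled using $\log b_1/\log b_2\notin\mathbb{Q}$ together with the line-thickness estimate $\lambda(l^\delta)\ll\delta^{1-\epsilon}$ of \cite[Thm.~7.2]{Sh}. The Kochen--Stone version of divergent Borel--Cantelli then gives $|\limsup E_n|>0$, and the upgrade to full Lebesgue measure uses the scaling invariance $n\in N^{D_1,D_2}_{b_1,b_2}(t_1,t_2)\Rightarrow kn\in N^{D_1,D_2}_{b_1,b_2}(t_1/k,t_2/k)$ together with a standard $0$--$1$ argument.

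The main obstacle is the correlation bound itself. For resonant pairs the crude inequality $|E_j\cap E_k|\le\min(|E_j|,|E_k|)$ is essentially saturated, which --- uncorrected --- would make the second moment of $\sum_{n\le T}\mathbf{1}_{E_n}$ dominate its squared mean. Converting the absolute continuity output of Theorem \ref{thm: n1} into a scale-averaged Fourier-analytic estimate that captures cancellation between these resonant contributions, combining the $\dim_{l^1}$ bound on $\lambda_1\times\lambda_2$ with the base-irrationality, is the delicate technical core of the argument.
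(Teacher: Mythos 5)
Your verification of the hypothesis $\dim_{l^1}(\lambda_1\times\lambda_2)>1$ via Theorem \ref{thm: l1 bound}(2) and the product inequality is correct, and this is indeed how the quantitative assumption on $\sum_i(\log|D_i|/\log b_i-\log\log b_i^2/\log b_i)$ enters. But the rest of the argument has a genuine gap that you yourself flag: the quasi-independence bound $|E_j\cap E_k|\ll|E_j||E_k|$ needed for Kochen--Stone is never established, and it is not a routine consequence of the absolute continuity of $\Pi_x(\lambda_1\times\lambda_2)$. Absolute continuity only produces an $L^1$ density on $S^1$; it gives no pointwise upper or lower bound on $\lambda_1\times\lambda_2(L^\delta)/\delta$ for the specific lines $L$ that arise from lattice points of $N_1\times N_2$, and it permits the density to vanish on a set of directions of positive Lebesgue measure inside the projected support. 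So even the qualitative ``for a.e.\ $(t_1,t_2)$'' conclusion does not follow from the level-1 output of Theorem \ref{thm: n1}, let alone the cancellation between resonant scales (e.g.\ $E_n$ versus $E_{b_1n}$) that your second-moment scheme would require. The step you describe as ``the delicate technical core'' is the entire theorem.

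The route the paper has in mind (following \cite{Y20}) avoids second moments over the parameter square altogether. One uses Theorem \ref{thm: n2}: since $\dim_{l^1}(\lambda_1\times\lambda_2)>1$, for all $\theta$ outside a set $E$ of Hausdorff dimension $<1$ one has $\int_{P_\theta^{1/100}}|\widehat{\lambda_1\times\lambda_2}(\xi)|\,d\xi<\infty$, so $P_\theta(\lambda_1\times\lambda_2)$ has a \emph{continuous} density. Continuity upgrades to local \emph{positivity} of the density, hence to a lower bound $\lambda_1\times\lambda_2(L^\delta)\gg\delta$ uniformly for lines $L$ in direction $\theta^\perp$ meeting a fixed compact piece of the support. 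The number-theoretic bridge is then deterministic: $n\in N^{D_1,D_2}_{b_1,b_2}(t_1,t_2)$ corresponds to a point of $N_1\times N_2$ within bounded distance of the ray of direction $(t_1,t_2)$; rescaling a window of $N_1\times N_2$ at a compatible scale pair $(b_1^{k},b_2^{k'})$ with $b_1^{k}/b_2^{k'}$ bounded (infinitely many such pairs exist because $\log b_1/\log b_2\notin\mathbb{Q}$) turns this into the condition $\lambda_1\times\lambda_2(L^{\delta})>0$ for $\delta\asymp b_1^{-k}$, which the positive continuous density supplies and which yields $\gg$ one new solution per scale pair. Since the exceptional set of slopes produced by pulling back $E$ through the countably many rescalings is Lebesgue-null in $(0,1]^2$, the conclusion holds almost everywhere. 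I would encourage you to redo the argument along these lines: the object you need is the continuous density of the \emph{linear} projections, not the $L^1$ density of the radial ones.
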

It is not difficult to find explicit examples. For instance, let $b_1=10^{10000}$ and $b_2=11^{10000}.$ Consider the digit sets
\[
D_1=\{1,\dots,10^{5005}\}, D_2=\{1,\dots,11^{5005}\}.
\]
Without the above theorem, it is not clear whether or not there exist $t,t'\in (0,1]$ such that
\[
N^{D_1,D_2}_{b_1,b_2}(t,t')
\]
is infinite. Of course, Conjecture \ref{conj} implies that the above set is infinite for all $t,t'\in (0,1]$ including the most interesting case $t=t'=1.$

\section{Acknowledgement}
HY was financially supported by the University of Cambridge and the Corpus Christi College, Cambridge. HY has received funding from the European Research Council (ERC) under the European Union's Horizon 2020 research and innovation programme (grant agreement No. 803711). HY has received funding from the Leverhulme Trust (ECF-2023-186).

\subsection*{Rights}

For the purpose of open access, the authors have applied a Creative Commons Attribution (CC-BY) licence to any Author Accepted Manuscript version arising from this submission.

\bibliographystyle{amsplain}

\end{document}